\font\smallit=cmti10
\renewcommand\section{\@startsection {section}{1}{\z@}
{-30pt \@plus -1ex \@minus -.2ex}
{2.3ex \@plus.2ex}
{\normalfont\normalsize\bfseries\boldmath}}
\renewcommand\subsection{\@startsection{subsection}{2}{\z@}
{-3.25ex\@plus -1ex \@minus -.2ex}
{1.5ex \@plus .2ex}
{\normalfont\normalsize\bfseries\boldmath}}
\renewcommand{\@seccntformat}[1]{\csname the#1\endcsname. }
\newtheorem{Theorem}{Theorem}
\newtheorem{Lemma}{Lemma}
\newtheorem{Conjecture}{Conjecture}
\newtheorem{Proposition}{Proposition}
\newtheorem{Corollary}{Corollary}
\newtheorem*{rems}{Remarks} 
\newenvironment{Remarks}{\begin{rems}\normalfont}{\end{rems}}
\newtheorem*{rem}{Remark} 
\newenvironment{Remark}{\begin{rem}\normalfont}{\end{rem}}
\newcommand{\li}{\operatorname{Li}}
\newcommand{\height}{H}
\newcommand{\sumH}{S}
\newcommand{\vp}{\varphi}
\begin{document}

\begin{center}
\uppercase{\bf Prime number conjectures from \\ the Shapiro class structure}
\vskip 20pt
{\bf Hartosh Singh Bal}\\
{\smallit The Caravan, Jhandewalan Extn., New Delhi, India}\\
{\tt hartoshbal@gmail.com}\\ 
\vskip 10pt
{\bf Gaurav Bhatnagar\footnote{Current address: School of Physical Sciences,
Jawaharlal Nehru University, Delhi, India
\\
2010 {\em Mathematics Subject Classification}: Primary 11A25; Secondary 11A41, 11N37
\\
{\em Keywords and phrases:}
Euler's totient function, Iterated totient function, Prime numbers, Multiplicative Functions, Chebyshev's theorem, Shapiro Classes
}}\\
{\smallit Fakult\"at f\"ur Mathematik,  Universit\"at Wien, 
Vienna, Austria.}\\
{\tt bhatnagarg@gmail.com}\\ 
\end{center}
\vskip 20pt
\vskip 30pt

\centerline{\bf Abstract}
\noindent
The height $H(n)$ of 
$n$, introduced by  Pillai in 1929,  is the 
smallest positive integer $i$ such that the  $i$th iterate of Euler's totient function at $n$~is~$1$. 
H.\ N.\ Shapiro (1943) studied the structure of the set of all numbers at a height.   
We state a formula for the height function due to Shapiro and use it to list steps to
generate numbers at any height. This turns out to be a useful way to think of this construct. In particular, we extend some results of Shapiro regarding the largest odd numbers at a height. We present some theoretical and computational evidence to show 
that $H$ and its relatives are closely related to the important functions of number theory, namely 
$\pi(n)$ and the $n$th prime $p_n$. We conjecture formulas for 
$\pi(n)$ and $p_n$ in terms of the height function.  \\


\baselineskip=12.875pt 
\vskip 30pt

\section{Introduction}\label{sec:intro} 
The principal object of our investigation is a number theoretic function $H$ that we call the {\em height function}. It is
defined as follows.  
Let $H(1):=0$, and 
\begin{equation}\label{H-def}
H(n) := H(\varphi(n))+1,
\end{equation}
for $n= 2, 3, 4, \dots$.
Here  $\vp(n)$ denotes Euler's totient function, the number of positive integers less than $n$ 
which are co-prime to $n$. 
The first few values of $H$ are: $0, 1, 2, 2, 3, 2$. 
 Let $C_k$ be the set of numbers at height $k$, that is, 
$$C_k:=\{ n: \height(n)=k\}.$$  
We call $C_k$ the {\em Shapiro classes} in honor of Harold N.\ Shapiro \cite{shapiro1943} who studied these classes first. 
In this paper, we examine the Shapiro class structure, and show that the height function and its relatives are very closely 
related to the important functions of number theory, namely $\pi(n)$, the number of primes less than or equal to $n$, and $p_n$ the $n$th prime number. 

Shapiro arrived at these classes by considering iterates of Euler's totient function.
For $i\geq 1$, we denote by
 $$\vp^i(n):=\underbrace{\vp(\vp(\cdots))}_{i \text{ times}}(n)$$ the $i$th iterate of 
$\vp$. Then the height function can also be defined as follows. 
Let $\height(1):=0$. For $n>1$, let 
$H(n)$ be the smallest number $i$ such that $\vp^i(n)=1$. The height function (with this definition) was studied first by Pillai \cite{pillai1929}. The Shapiro classes (by other names)  have been studied earlier by Shapiro \cite{shapiro1943},  Erd\H{o}s, Granville, Pomerance and Spiro \cite{EGPS1990}, and others \cite{catlin1970,  mills1943,  noe2008, parnami1967}. We have mildly modified Shapiro's formulation.

%

In Section~\ref{sec:formula} we find an alternative, inductive, approach to 
generate Shapiro classes. To do so, we
require a formula for $H(n)$ due to
Shapiro. 
 In Section~\ref{sec:class-structure}, we apply our ideas to extend a theorem of Shapiro about the 
largest odd numbers at a given height. 
This rests upon a property that is obvious from our construction, but not observed earlier, regarding the largest possible prime number at a height. 
This, and results of Erd\H{o}s et.al.~\cite{EGPS1990}, led us to look for number theoretic information from this structure. 
 
Since $H(n)$ is not an increasing function, we consider  the sum of heights function,  defined as: $S(0):=0$, and 
\begin{equation}\label{sumH-def}
\sumH(n):= \sum_{k=1}^n \height(k).
\end{equation}
The structure consisting of Shapiro classes allows  
us to obtain number theoretic information quite easily. 
It appears that elementary techniques, such as those found in the textbooks of Apostol \cite{apostol1976} and Shapiro \cite{shapiro1983}, can be modified to express classical theorems
in terms of functions related to the height function. 
We illustrate this idea in Section~\ref{sec:chebyshev}, by proving Chebyshev-type theorems, that is, inequalities for $\pi(n)$ and $p_n$ in terms of $n$ and $S(n)$. 

The function $S(n)$ appears to be important to the theory of prime numbers. 
Consider, for example, the values of $S(n)$ for $n=1, 2, \dots, 9$. These are
\begin{align*}
0, &1, 3, 5, 8, 10, 13, 16, 19.\cr
\intertext{Compare these with the first few prime numbers:}
&2, 3, 5, 7, 11, 13, 17, 19.
\end{align*}
This pattern persists; consider Figure~\ref{fig:nthprime}, a plot of $S(n)$ and $p_n$ on the same set of axes. There is remarkable agreement of these graphs (up to $n=5000$). From numerical computations
(see Tables~\ref{table:S-nthprime1} and \ref{table:S-nthprime2}), it appears that $S(n)$ is a better approximation to $p_n$ than $n\log n$, at-least until $n=6\times10^7$.
\begin{figure}[h]
\begin{center}
\includegraphics[scale=0.50]{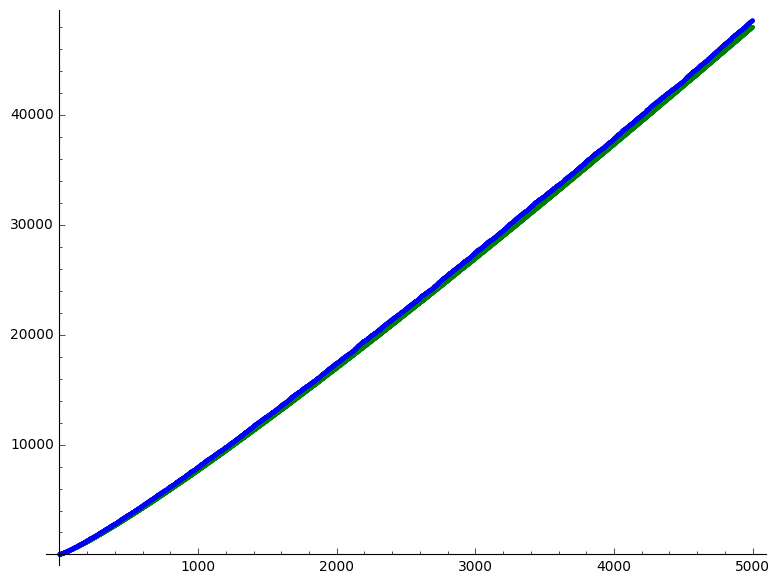}
\caption{Comparison of the $n$th prime (top) with $S(n)=\sum_{k=1}^n H(k)$. }
\end{center}
\label{fig:nthprime}
\end{figure}
\begin{table}[h]
$$
\begin{array}{| c | c | c | c | c | c  |c|} \hline
n& p_n & S(n) & S(n)-p_n & \lfloor n\log n \rfloor & \lfloor n\log n \rfloor -p_n & \frac{S(n)-p_n}{p_n} \\
\hline
10 & 29 & 22 & -7 & 23 & -6 & -24.1 \% \cr \hline
10^2 & 541 & 486 & -55 & 460 & -81 & -10.2 \% \cr \hline
10^3 & 7919 & 7640 & -279 & 6907 & -1012 & -3.52 \% \cr \hline
10^4 & 104729 & 104488 & -241 & 92103 & -12626 & -0.230 \% \cr \hline
10^5 & 1299709 & 1325890 & 26181 & 1151292 & -148417 & 2.01 \% \cr \hline
10^6 & 15485863 & 16069024 & 583161 & 13815510 & -1670353 & 3.77 \% \cr \hline
10^7 & 179424673 & 188786066 & 9361393 & 161180956 & -18243717 & 5.22 \% \cr \hline
\end{array}
$$
\caption{$S(n)$ against $ \lfloor n\log n \rfloor$}
\label{table:S-nthprime1}
\end{table}

This ties up with a result/conjecture of 
Erd\H{o}s, Granville, Pomerance and 
Spiro~\cite{EGPS1990} and 
motivates the experimental work presented in Section~\ref{sec:conjectures}.

%


\section{Listing Shapiro classes}\label{sec:formula}
We begin our study  of the height function and Shapiro classes. The objective of this section to build some useful intuition, by writing a set of rules to generate Shapiro classes inductively. 
Towards this end, we first prove a formula due to Shapiro for calculating the height of a function. 
As a corollary, we show 
the additive nature of the height function.
We illustrate the 
usefulness of these rules by obtaining several elementary properties of the Shapiro classes. 

It is instructive to compute the first few classes.  Table~\ref{table:first-3-rows} gives the first four Shapiro classes. We begin with $1$ in $C_0$ and $2$ in $C_1$. Now since $\varphi(3)=2$, we find that $H(3)=H(2)+1=2$. So $3\in C_2$. Similarly,  we see that $4$ has height $2$, $5$ has height $3$, and $6$ has height $2$. 
\begin{table}[h]
$$
\begin{array}{c|| l }
\hline
k & C_k \cr \hline
3 & 5, 7, 8, 9, 10, 12, 14, 18 \cr \hline
2 & 3, 4, 6 \cr \hline
1 & 2 \cr \hline
0 & 1 \cr \hline
\end{array}
$$
\caption{$C_k$ for $k=0, 1, 2, 3$.}
\label{table:first-3-rows}
\end{table}
By computing the values of $H(n)$ for a few more values, a few rules for finding members of the Shapiro class become evident. For example, consider the following rules for finding members of $C_k$ from $C_{k-1}$. We have
\begin{itemize}
\item If $m$ is an odd number, then the height of $2m$ is the same as $m$. 
\item If $m$ is an even number, then $H(2m)=H(m)+1$. 
\item If $m$ is an odd number, then $H(3m)=H(m)+1$. 
\item If $p$ is a prime number, then $H(p)=H(p-1)+1$. Thus $p-1$ is an even number at one height lower than $p$. 
\end{itemize}
We can use these rules to generate the members of $C_4$. If we multiply all the even numbers of 
$C_3$ by $2$, we obtain $16$, $20$, $24$, $28$, $36$. Now multiplying all the odd numbers by 
$3$, we find that $15$, $21$, $27$ are in $C_4$. Next, consider  $8+1=9$, $10+1=11$, 
$12+1=13$, $14+1=15$, $18+1=19$. Of these, $11$, $13$, and $19$, are primes, and are thus at 
height $4$. Finally, the odd numbers already obtained are: $11, 13, 15, 19, 21, 27$. Multiplying 
them by $2$, we find that $22, 26, 30, 38, 42,$ and $54$ are also in $C_4$. The reader may verify 
that we have obtained all the numbers at height $4$. 

The rules work to generate all the elements of $C_4$ from $C_3$, but they are not comprehensive. The complete set of rules will appear shortly, as an application of a formula for $H(n)$. 
\begin{Theorem}[Shapiro \cite{shapiro1943}]\label{th:hn-formula} 
Let $n>1$ with prime factorization $n=2^\alpha p_1^{\alpha_1}p_2^{\alpha_2}\dots p_r^{\alpha_r}$, where 
 $p_1$, $p_2$, $p_3$, $\dots$, $p_r$ are distinct odd primes, and $\alpha, \alpha_1$, 
 $\alpha_2,$ $\dots,$ $\alpha_r\geq 0$. Then,
\begin{equation}\label{H}
H(n)=
\begin{cases}
 \alpha+\sum\limits_{i=1}^r \alpha_i (H(p_i)-1)& \text{ if } \alpha>0
 \cr
 \sum\limits_{i=1}^r \alpha_i (H(p_i)-1) +1 & \text{ if } \alpha =0.
 \end{cases}
 \end{equation}
\end{Theorem}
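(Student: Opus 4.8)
The plan is to prove the formula by strong induction on $n$, using only the defining recursion $H(n) = H(\varphi(n)) + 1$ together with the multiplicativity of $\varphi$. To organize the bookkeeping it helps to introduce the completely additive function $A$, defined on prime powers by $A(2^\alpha) = \alpha$ and $A(p^{a}) = a(H(p) - 1)$ for odd primes $p$, and extended by $A(mn) = A(m) + A(n)$ for all $m,n$. With this notation the two cases of \eqref{H} merge into the single assertion
\begin{equation*}
H(n) = A(n) + \chi(n), \qquad \chi(n) := \begin{cases} 1 & n > 1 \text{ odd},\\ 0 & \text{otherwise}, \end{cases}
\end{equation*}
and it is this identity that I would establish by induction.

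The engine of the argument is the auxiliary relation
\begin{equation*}
A(\varphi(n)) = A(n) - \begin{cases} 1 & n \text{ even},\\ 0 & n \text{ odd}, \end{cases}
\end{equation*}
valid for $n > 1$. First I would reduce this to prime powers by combining the complete additivity of $A$ with the multiplicative evaluations $\varphi(p^{a}) = p^{a-1}(p-1)$ for odd $p$ and $\varphi(2^\alpha) = 2^{\alpha-1}$; the whole statement then collapses to the single relation $A(p-1) = A(p)$ for each odd prime $p$. This last relation is exactly where the recursion enters: since $\varphi(p) = p - 1$ we have $H(p) = H(p-1) + 1$, and because $p - 1$ is even and strictly smaller than $n$ the inductive hypothesis gives $H(p-1) = A(p-1)$; combined with the definition $A(p) = H(p) - 1$ this yields $A(p) = A(p-1)$. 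Note that for every prime factor $p$ of $n$ one has $p - 1 < n$, so the hypothesis is genuinely available and there is no circularity.

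With the auxiliary relation in hand the induction closes quickly. For $n \geq 3$ the totient $\varphi(n)$ is even, so $\chi(\varphi(n)) = 0$ and the inductive hypothesis gives $H(\varphi(n)) = A(\varphi(n))$; hence $H(n) = H(\varphi(n)) + 1 = A(\varphi(n)) + 1$, which equals $A(n)$ when $n$ is even and $A(n) + 1$ when $n$ is odd, exactly as the identity demands. The value $n = 2$ (where $\varphi(2)=1$ is odd, and the relation above needs separate handling) together with the base case $n = 1$ are checked directly.

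I expect the main obstacle to be the careful tracking of the powers of $2$ produced by iterating $\varphi$: each odd prime factor $p_i$ contributes an even number $p_i - 1$, so $\varphi(n)$ typically acquires new factors of $2$, and one must be sure the additive weight $A(2) = 1$ absorbs all of them. Packaging this entirely into the single relation $A(p-1) = A(p)$, and confirming that the inductive hypothesis applies to each $p_i - 1$, is the crux of the proof; once that is secured, everything else is routine manipulation of the multiplicative formula for $\varphi$ and the additivity of $A$.
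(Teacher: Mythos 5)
Your proof is correct, and it takes a genuinely different route from the paper's. The paper inducts on the number of distinct odd primes in the factorization, with nested sub-inductions on the exponents ($2^\alpha$ first, then $2^\alpha 3^{\alpha_1}$, then the general step adjoining $p_{r+1}$), and at each stage it reuses the additivity computation from Corollary~\ref{cor:sub-additivity} restricted to the numbers already covered by the induction hypothesis. You instead run a single strong induction on $n$, packaging the entire combinatorial content into the completely additive weight $A$ and the one identity $A(\varphi(n)) = A(n) - 1$ for $n$ even, $A(\varphi(n)) = A(n)$ for $n$ odd, which in turn reduces to $A(p-1)=A(p)$ for each odd prime $p \mid n$; that last relation is available because $p-1 < n$ and $p-1$ is even, so the inductive hypothesis applies with $\chi(p-1)=0$, and there is indeed no circularity. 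Your version is shorter, the induction runs over a single well-ordered parameter, and it sidesteps the slightly delicate point in the paper where the proof of the theorem borrows the computation of a corollary stated after it. What the paper's organization buys in exchange is that the additivity statement (Corollary~\ref{cor:sub-additivity}) emerges as an explicit intermediate object, mirroring Shapiro's original framing in which that additivity is the ``fundamental theorem''; in your setup the same fact is an immediate consequence of complete additivity of $A$ plus the parity bookkeeping in $\chi$, so nothing is lost. One small point worth making explicit when you write this up: the identity $A(\varphi(n))=A(n)-1$ for even $n$ uses $\varphi(2^\alpha)=2^{\alpha-1}$ together with $A(2)=1$, and for $\alpha=1$ this produces the factor $2^0$, which is harmless since $A(1)=0$; similarly you should note $\varphi(n)<n$ and $\varphi(n)$ even for all $n\ge 3$, which you do.
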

Before giving a proof of this theorem, we obtain a corollary which indicates the additive nature of  the height function. 
\begin{Corollary}[Shapiro \cite{shapiro1943}]\label{cor:sub-additivity} Let $n$ and $m$ be natural numbers with prime factorizations
$m=2^\alpha p_1^{\alpha_1}p_2^{\alpha_2}\dots p_r^{\alpha_r}$  and 
$n=2^\beta p_1^{\beta_1}p_2^{\beta_2}\dots p_r^{\beta_r},$
where $\alpha, \beta, \alpha_i, \beta_i \geq 0$. Then we have
\begin{equation}
H(nm) = 
\begin{cases}
H(m)+H(n),  & \text{ if } \alpha, \beta>0; \cr
H(m) + H(n)-1, & \text{ if  }  \alpha =0 \text{ or } \beta = 0 .
\end{cases}
\end{equation}
\end{Corollary}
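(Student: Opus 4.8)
The plan is to derive the corollary as an immediate consequence of the closed form for $H$ in Theorem~\ref{th:hn-formula}. First I would record the prime factorization of the product, namely
$$nm = 2^{\alpha+\beta}\, p_1^{\alpha_1+\beta_1} p_2^{\alpha_2+\beta_2}\cdots p_r^{\alpha_r+\beta_r},$$
so that the exponent of each odd prime $p_i$ in $nm$ is $\alpha_i+\beta_i$ and the exponent of $2$ is $\alpha+\beta$. The essential observation is that the ``main term'' $\sum_{i=1}^r \alpha_i(H(p_i)-1)$ appearing in \eqref{H} is additive: summing the contributions for $m$ and $n$ produces exactly $\sum_{i=1}^r (\alpha_i+\beta_i)(H(p_i)-1)$, which is the corresponding term for $nm$. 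Everything therefore reduces to bookkeeping of the exponent of $2$ together with the conditional constant ($+\alpha$ or $+1$) that \eqref{H} attaches according to whether that exponent is positive or zero.

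Next I would split into cases according to the vanishing of $\alpha$ and $\beta$. When $\alpha,\beta>0$ we also have $\alpha+\beta>0$, so all three of $H(m)$, $H(n)$, $H(nm)$ are computed by the first branch of \eqref{H}; adding the expressions for $H(m)$ and $H(n)$ contributes $\alpha+\beta$ from the powers of $2$ together with the additive main term, which is precisely $H(nm)$. This gives $H(nm)=H(m)+H(n)$, the fully additive case, reflecting that no ``$+1$'' correction is triggered when both factors are even.

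The remaining work, and the only place where care is needed, is the case $\alpha=0$ or $\beta=0$, which I would handle as two sub-cases. If exactly one exponent vanishes, say $\beta=0<\alpha$, then $H(n)$ is computed by the second branch (carrying a $+1$) while $H(m)$ and $H(nm)$ use the first branch, since $\alpha+\beta=\alpha>0$; comparing $H(m)+H(n)$ with $H(nm)$ then reveals a single surplus $+1$. If both vanish, $\alpha=\beta=0$, then $H(m)$ and $H(n)$ each carry a $+1$ from the second branch, but $nm$ is still odd, so $H(nm)$ carries only one $+1$, again leaving a surplus of exactly one. In both sub-cases one obtains $H(nm)=H(m)+H(n)-1$, as asserted. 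The main (modest) obstacle is simply tracking how many copies of the constant $+1$ enter each side: it is attached once per odd factor, but the product $nm$ is odd only when \emph{both} $m$ and $n$ are odd, so the inclusive ``or'' in the hypothesis correctly collapses the two sub-cases into a single formula.
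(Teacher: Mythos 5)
Your proposal is correct and follows essentially the same route as the paper: both derive the corollary directly from the closed formula \eqref{H} of Theorem~\ref{th:hn-formula}, using the additivity of the term $\sum_i \alpha_i(H(p_i)-1)$ and a case split on whether $\alpha$ and $\beta$ vanish. Your write-up merely spells out the two sub-cases of the second branch, which the paper leaves as ``similarly.''
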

\begin{Remark} Shapiro \cite{shapiro1943} considered Corollary~\ref{cor:sub-additivity} as his fundamental theorem. The formula \eqref{H} follows immediately. The proof presented below is closely related to Shapiro's proof of Corollary~\ref{cor:sub-additivity}. 
\end{Remark}
\begin{proof}
Note that if both $\alpha$ and $\beta$ are strictly positive, then
\begin{align*}
H(mn) &= \alpha +\beta+ \sum_{i=1}^r (\alpha_i+\beta_i)\left( H(p_i)-1\right) \cr
&= H(n)+H(m),
\end{align*}
as required.
Similarly, the second part of the formula follows by considering the cases where only one of $\alpha$ and $\beta$ is $0$, and where both $\alpha$ and $\beta$ are $0$. 
\end{proof}
\begin{proof}[Proof of Theorem \ref{th:hn-formula}]
We prove this formula by induction on the number of primes in the prime factorization of $n$. Remarkably, we will require the computation in the corollary to prove the theorem. 

First we prove the formula for numbers $n$ of the form 
$n=2^\alpha$, by induction. Clearly, the formula holds for $\alpha=1$. Suppose it holds for $\alpha =k$. Then 
$$H(2^{k+1})=H\big(\vp(2^{k+1})\big)+1 = H(2^k)+1 = k+1$$
by the induction hypothesis. This proves the formula for powers of $2$. 

Next, we consider numbers $n$ of the form $n=2^\alpha\cdot3^{\alpha_1}$. Consider first the case $\alpha>0$. Again we will prove this formula by induction, this time on $\alpha_1$. 
It is easy to verify the formula for $\alpha_1=1$. 
 Next suppose the formula is true for $\alpha_1=k$. For $n=2^\alpha\cdot 3^{k+1}$, we have
$$H(2^\alpha\cdot 3^{k+1}) = H\big(\vp(2^\alpha\cdot 3^{k+1})\big) +1 
= H(2^\alpha\cdot 3^k)+1 = \alpha+(k+1)\cdot(H(3)-1)
$$
as required. 

Next we consider the case $\alpha=0$.  In this case, we can obtain the formula from the previous case. Since $\alpha_1\geq 1$, We have
$$H(n)=H\big(\vp(n)\big) +1= H\big(2\cdot 3^{\alpha_1-1}\big)+(H(3)-1)  = 1+\alpha_1(H(3)-1)$$
which is the required formula. 

Next, as the induction hypothesis, we assume that the formula is valid for numbers 
of the form
$$2^\alpha p_1^{\alpha_1}\cdots p_r^{\alpha_r},$$
where $p_1<p_2<\cdots < p_r$ are the first $r$ odd primes. We will show the formula for 
$$n = 2^\alpha p_1^{\alpha_1}\cdots p_r^{\alpha_r} p_{r+1}^{\alpha_{r+1}}.$$ 
We first consider the case $\alpha>0$. Again we will prove this by induction on $\alpha_{r+1}$.
Let 
\begin{align*}
n^{\prime} &= \vp(p_{r+1}) = 2^\beta p_1^{\beta_1} \cdots p_r^{\beta_r}\cr
m^{\prime} &= \vp(n/p_{r+1}^{\alpha_{r+1}}) = 2^\gamma p_1^{\gamma_1} \cdots p_r^{\gamma_r}.
\end{align*}
We will first prove the formula for $\alpha_{r+1}=1$. 
Then, 
$$\vp(n) = \vp(p_{r+1}) \vp(n/p_{r+1}),$$
so
\begin{align*}
H(n)&=H(\vp(n))+1 \cr
&=H(m^{\prime}n^{\prime})+1\cr
&= H(m^{\prime}) + H(n^{\prime}) +1 .
\end{align*}
The last equality follows by the argument in Corollary~\ref{cor:sub-additivity} applied to even 
numbers that have  
only $2$, $p_1$, $\dots$, $p_r$ in their prime factorization. (By the induction hypothesis, we have 
assumed that \eqref{H} holds for such numbers, 
and so this argument can be used in our proof.) 
Thus we find that 
\begin{align*}
H(n) &= H(m^{\prime}) + H(n^{\prime})+1 \cr
&= H\big( 2^\alpha p_1^{\alpha_1}\cdots p_r^{\alpha_r} \big) -1 + H(p_{r+1})-1 +1\cr
&= \alpha +\sum_{i=1}^r \alpha_i \big( H(p_i)-1\big) +H(p_{r+1}) -1,
\end{align*}
as required. 

Next, we suppose the formula is true for $\alpha_{r+1}=k$. We will prove it for 
$$n= 2^\alpha p_1^{\alpha_1}\cdots p_r^{\alpha_r} p_{r+1}^{k+1}.$$
Again, let $m^\prime$, $n^\prime$, $\beta_i$, $\gamma_i$ be as before. Then we have
\begin{align*}
H(n) & = H(\vp(n)) +1 \cr
&= H\big( 2^{\beta+\gamma} p_1^{\beta_1+\gamma_1}\cdots p_r^{\beta_r+\gamma_r}
p_{r+1}^{k}  \big) +1 \cr
&= \beta+\gamma + \sum_{i=1}^r (\beta_i+\gamma_i)\big( H(p_i)-1\big) + k(H(p_{r+1}-1)) +1\cr
&= H(\vp(p_{r+1})) +  H\big(\vp(2^\alpha p_1^{\alpha_1}\cdots p_r^{\alpha_r})\big) + 
    k(H(p_{r+1})-1) +1\cr
 & = H\big(2^\alpha p_1^{\alpha_1}\cdots p_r^{\alpha_r}\big) + (k+1)(H(p_{r+1})-1),
\end{align*}
which proves the formula for $\alpha>0$. 

Finally, we prove the formula for $\alpha=0$ and $\alpha_{r+1}\geq 1$. Let 
$$n = p_1^{\alpha_1}\cdots p_r^{\alpha_r} p_{r+1}^{\alpha_{r+1}}.$$
Now 
\begin{align*}
H(n) &  = H(\vp(n)) +1 \cr
& = H\big( 2^{\beta+\gamma} p_1^{\beta_1+\gamma_1} \cdots p_r^{\beta_r+\gamma_r}
p_{r+1}^{\alpha_{r+1}-1}  \big) +1\cr
& = H(p_1^{\alpha_1}\cdots p_r^{\alpha_r}) + \alpha_{r+1}(H(p_{r+1})-1)+1,
\end{align*}
where the last step is obtained as in the $\alpha>0$ case. From here we obtain the 
formula by applying the induction hypothesis. 
\end{proof}

We return to our set of rules to generate the numbers at a given height from the previous ones. 
Corollary~\ref{cor:sub-additivity} tells us what happens if we multiply a number by $5$. Recall that the height of $5$ is  $3$. So if $m$ is any number, then 
$H(5m)=H(m)+2$. Thus to obtain numbers at height $k$ we have to multiply numbers at height $k-2$ by $5$ and the other prime at height $3$, namely $7$. In general, to obtain all the numbers at a height, we have to consider all the primes at lower heights. We are now in a position to create
a comprehensive list of rules to generate $C_k$.  

Let $Q_k$ denote the set of primes at height $k$. That is, for $k=1, 2, 3, \dots$, define
$$Q_k:= \{ p :  p  \text{ a prime and } H(p)=k\}.$$ 
For example: 
$$Q_1=\{ 2 \}, Q_2=\{ 3 \}, Q_3 = \{ 5, 7 \}, Q_4=\{ 11, 13, 19\}, 
Q_5=\{ 17, 23, 29, 31, 37, 43\}.$$ 
\begin{Theorem}\label{th:algo1} Suppose $C_1$, $C_2$, $\dots,$ $C_{k-1}$ are known.
The steps for obtaining $C_k$, that is, the elements at height $k$ are as follows
\begin{enumerate}
\item Multiply each even element of  $C_{k-1}$ by $2$.
\item Multiply each odd element of $C_{k-1}$ by $3$.
\item 
Multiply each element of $C_{k-k_1}$ by elements of $Q_{k_1+1}$, where $k_1=2, 3, \dots.$
\item If $m+1$ is a prime where $m$ is an even 
number in $C_{k-1}$, then $m+1\in C_k$. 
\item Multiply each odd number obtained already in $C_k$ by $2$. 
\end{enumerate}
\end{Theorem}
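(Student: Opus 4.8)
The plan is to establish the two inclusions that together say the five steps produce \emph{exactly} $C_k$: soundness (every number a step produces has height $k$) and completeness (every $n$ with $H(n)=k$ is produced by some step). Throughout, the workhorse is Corollary~\ref{cor:sub-additivity}: for integers $m,n>1$ one has $H(mn)=H(m)+H(n)$ when both are even and $H(mn)=H(m)+H(n)-1$ when at least one is odd. I would treat $k\ge 2$ as the content of the theorem, taking the base data $C_0=\{1\}$ and $C_1=\{2\}$ as given. The restriction to factors strictly larger than $1$ is essential, since Corollary~\ref{cor:sub-additivity} fails for the unit (e.g.\ $H(3\cdot 1)=2\ne H(3)+H(1)-1$); this is exactly why the element of $C_0$ is never used as a multiplicand once $k\ge 2$.

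For soundness I would check the five steps one at a time, recalling $H(2)=1$, $H(3)=2$, and $H(p)=H(p-1)+1$ for primes $p$. If $m\in C_{k-1}$ is even then $H(2m)=H(2)+H(m)=k$ (Step~1); if $m\in C_{k-1}$ is odd then $H(3m)=H(3)+H(m)-1=k$ (Step~2); if $m\in C_{k-k_1}$ and $p\in Q_{k_1+1}$ then $p$ is an odd prime and $H(pm)=H(p)+H(m)-1=(k_1+1)+(k-k_1)-1=k$ irrespective of the parity of $m$ (Step~3); if $m\in C_{k-1}$ is even and $m+1$ is prime then $H(m+1)=H(\varphi(m+1))+1=H(m)+1=k$ (Step~4); and if $m$ is an odd number already placed in $C_k$ then $H(2m)=H(2)+H(m)-1=k$ (Step~5). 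Here I would record that Step~3 is meant to range over $2\le k_1\le k-2$, so that the multiplicand $m\in C_{k-k_1}$ exceeds $1$ and the prime class $Q_{k_1+1}$ sits among the already-known $Q_1,\dots,Q_{k-1}$.

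For completeness I would take $n\in C_k$ and split on the $2$-adic valuation $\alpha$ of $n$ together with the shape of its odd part. If $4\mid n$ then $n/2$ is even with $H(n/2)=k-1$, so $n$ arises from Step~1. If $\alpha=1$, write $n=2m$ with $m$ odd; then $H(m)=k$, so $n$ arises from Step~5 once $m$ has been built. If $n$ is an odd prime then $n-1$ is even with $H(n-1)=k-1$, so $n$ arises from Step~4. If $n$ is odd and composite with $3\mid n$, then $n/3$ is odd with $H(n/3)=k-1$, so $n$ arises from Step~2. Finally, if $n$ is odd and composite with $3\nmid n$, I would pick a prime factor $p\ge 5$ and set $k_1=H(p)-1\ge 2$; then $n/p>1$ is odd with $H(n/p)=k-k_1\ge 2$, so $n$ arises from Step~3. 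In every case the parent lies in a class $C_j$ with $j<k$ (Steps~1--4) or is an odd element of $C_k$ (Step~5), and the relevant height identity is just the one verified for soundness, read in reverse via Corollary~\ref{cor:sub-additivity}.

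The main obstacle I anticipate is not any single computation but the bookkeeping that makes completeness mesh with the prescribed \emph{order} of the steps. Two points need care. First, Step~5 is legitimate only because every odd number of height $k$ has already been produced by Steps~2--4 before Step~5 fires; I would confirm this by checking that the three sources---odd primes via Step~4, odd multiples of $3$ via Step~2, and odd numbers carrying a prime factor $\ge 5$ via Step~3---jointly exhaust the odd part of $C_k$. Second, I must ensure that in each completeness case the decomposition peels off a single prime leaving a cofactor strictly greater than $1$, since that is precisely what licenses Corollary~\ref{cor:sub-additivity}; this is why the numbers $n=2m$ with $m$ odd are deliberately routed through Step~5 rather than through Step~3 with $k_1=k-1$, which would otherwise require the not-yet-constructed prime class $Q_k$.
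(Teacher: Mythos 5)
Your proposal is correct and follows essentially the same route as the paper: soundness of Steps 1--3 and 5 via Corollary~\ref{cor:sub-additivity}, Step 4 via $\varphi(p)=p-1$, and completeness by classifying $n\in C_k$ according to whether $4\mid n$, $n=2m$ with $m$ odd, $n$ is an odd prime, or $n$ is an odd composite with or without the factor $3$. You simply carry out in full the case analysis the paper states tersely, and your explicit remarks on the admissible range $2\le k_1\le k-2$ and on why $2p$ with $p\in Q_k$ must be routed through Step 5 are correct clarifications of points the paper leaves implicit.
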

\begin{Remark}
Shapiro considered a slightly different function $C(n)$, where $C(1)=0$ and 
$C(n)=H(n)-1$, for  $n>1$. So $C(2)=0$ whereas $H(2)=1$.  Theorem~\ref{th:algo1} is the primary reason why we have deviated from Shapiro's formulation. 
\end{Remark}
\begin{proof} 
The proof is a formalization of our earlier discussion. Consider the following statements.
\begin{enumerate}
\item If $x\in C_{k-1}$ is even, then $H(2x) = k$. 
\item if $x\in C_{k-1}$, then $H(3x)=k$.
\item if $x\in C_{k-k_1}$ and $p\in Q_{k_1+1}$ then 
$H(px)
= k$. 
\end{enumerate}
All these statements follow from Corollary~\ref{cor:sub-additivity}.

Step 1 generates all even numbers of the form $2^a m$ where $a>1$, and $m$ is odd, i.e., numbers divisible by $4$. The steps 2 and 3 will generate all composite, odd, numbers in  $C_k$. 

Since $\phi(p)=p-1$, and for all odd primes $p-1$ is even, thus all prime numbers are obtained by adding $1$ to even numbers in $C_{k-1}$. This explains Step 4. 

It remains to generate numbers of the form $2m$ where $m$ is odd. Since $H(2n)=H(n)$ when $n$ is odd, we must multiply each odd number obtained by steps 1--4 by $2$. This will generate all the elements of $C_k$. 
\end{proof}
\begin{Remark} In Step 3, we need to multiply only those elements of $C_{k-k_1}$ that 
are odd and not divisible by $3$.  
\end{Remark}

Now it is easy to generate some more sets $C_k$. The elements of the first few sets $C_k$ are given in Table~\ref{classes}. The prime numbers are
given in bold.
\begin{table}[h]
\begin{center}
\begin{tabular}[h]{| l | l | }
\hline\hline
$k$ & $C_k$\cr
\hline
$6$ & $\bold{41}, \bold{47}, 51, \bold{53}, 55, \bold{59}, \bold{61}, 64, 65, \bold{67}, 68, 69, 
\bold{71}, \bold{73}, 75, 77,$\\
& $\bold{79}, 80, 82, 87, 88, 91, 92, 93, 94, 95, 96, 99, 100, 102, 104, $\\
 & $ 105, 106, \bold{109}, 110, 111, 112, 116, 117, 118, 120, 122, 124, $\\
 & $ \bold{127}, 129, 130, 132, 133, 134, 135, 138, 140, 142, 144, 146,$ \\
& $  147, 148, 150, 152, 154, 156, 158, \bold{163}, 168,  171, 172, 174,$\\
& $ 180, 182, 186, 189, 190, 196, 198, 210, 216, 218, 222, 228,$\\
& $  234, 243, 252, 254, 258, 266, 270, 294, 324, 326, 342, 378,$\\
& $  486 $\cr
$5$ & $\bold{17}, \bold{23}, 25, \bold{29}, \bold{31}, 32, 33, 34, 35, \bold{37}, 39, 40,
\bold{43}, 44, 45, 46,$ \\
&$ 48, 49, 50, 52, 56, 57, 58, 60, 62, 63, 66,
70, 72, 74, 76, 78, $\\
&$ 81, 84, 86, 90, 98, 108, 114, 126, 162$\cr
$4$ & $\bold{11}, \bold{13}, 15, 16, \bold{19}, 20, 21, 22, 24, 26, 27, 28, 30, 36, 38, 42, 54$ \cr
$3$ & $ \bold{5}, \bold{7}, 8, 9, 10, 12, 14, 18$ \cr
$2$ & $\bold{3}, 4, 6 $ \cr
$1$ & $ \bold{2} $ \cr
$0$ & $1$ \cr
\hline
\end{tabular}
\end{center}
\caption{Numbers with height $\le 6$. The primes are in bold.}\label{classes}
\end{table}

Theorems~~\ref{th:hn-formula} and \ref{th:algo1} are very useful to think about the $C_k$. We illustrate this in the following observations.   These have been found previously by
other authors. 
\subsection*{Observations}
\begin{enumerate}
\item The number $2^k$ comes at height $k$. 

\item The smallest even number at height $k$ is $2^k$. To see this, consider the following argument.

An even number in $C_k$ can arise in two ways. If it is obtained by multiplying an element of $C_{k-1}$ by $2$, it is bigger than or equal to $2^k$ by induction. The other possibility is that it is of the form $2m$, where $m$ is an odd number at height $k$. 

If $m$ is a prime number, then it is obtained by adding $1$ to an even number at height $k-1$, so it is bigger than $2^{k-1}$ by induction. This implies $2m >2^k$. 

Suppose $m$ is an odd, composite number with prime factorization 
$$m=p_1^{\alpha_1}\dots p_r^{\alpha_2},$$ with $H(p_i)=k_i+1$.
Then by Theorem~\ref{th:hn-formula}, $H(m)=k$ implies that $$k-1=\sum_{i=1}^r \alpha_i k_r .$$
But by induction, as above, we must have $p_i> 2^{k_i}$ for $i=1, 2, \dots, r$. Thus 
$$m = \prod_i p_i^{\alpha_i} > 2^{\sum_i \alpha_i k_i} = 2^{k-1}.$$ 
Again, this implies that $2m>2^k$. 

\item Any odd number in $C_k$ is bigger than $2^{k-1}$.  This follows from the above argument. 

\item\label{obs:hnlen}  If $n\le 2^k$, then $H(n)\le k$. This is what the two items above amount to; this was implicit in Pillai~\cite{pillai1929}, and stated by Shapiro \cite{shapiro1943}.

\item (Shapiro \cite{shapiro1943})  The numbers at height $h$ that are less than $2^h$ are all odd.

%
%


\item (Shapiro \cite{shapiro1943}) The largest odd number at height $h$ is $3^{h-1}$. This follows from induction and Theorem~\ref{th:algo1}. 
\item (Pillai~\cite{pillai1929}) The largest even number at height $h$ is $2\cdot 3^{h-1}$.
\item (Pillai \cite{pillai1929}) 
Since any number at height $h$ is between $2^{h-1}$ and 
$2\cdot 3^{h-1}$, we have the inequalities:
\begin{equation}\label{pillai} 
\frac{\log n/2}{\log 3}+1 \le H(n) \le \frac{\log n}{\log 2}+1.
\end{equation}

\end{enumerate}

All the observations above were noted by previous authors. One can ask whether 
Theorem~\ref{th:algo1}  gives any new information. 
Indeed, there is one very important observation missed by previous authors.

The largest prime at a level $k$ is less than or equal to $2\cdot 3^{k-2}+1$. This is obvious from Step~4 of Theorem~\ref{th:algo1} and Pillai's observation that the largest even number at height $k-1$ is $2\cdot 3^{k-2}$.

In the next section, we show how this observation can be used to obtain information about the numbers appearing at the end of each class, thus extending some of Shapiro's results.

\section{On the Shapiro Class structure}\label{sec:class-structure}

The objective of this section is to illustrate the application of Theorems~\ref{th:hn-formula} and 
\ref{th:algo1}, by extending some results of Shapiro \cite{shapiro1943}. Our theorem in this section is a characterization of the last few numbers at each height. 

As noted above, the largest prime at a level $k$ is less than or equal to $2\cdot 3^{k-2}+1$. This upper bound is met for many $k$. The smallest such examples are obtained when 
$k= 2,$ $3,$ $4,$ $6,$ $7,$ $8,$ $11,$ $18,$ $19$. The first few examples, corresponding to these values of $k$, are 
$ 3, 7, 19, 163, 487, 1458, 39367, 86093443, 258280327$.
Primes of this kind (cf. OEIS \cite[A003306]{sloane}) play an important role in our theorem. 
Let $\widehat{P}$ denote the set of primes of this form, that is,
$$\widehat{P}:=\{ p : p \text{ a prime}, p = 2\cdot 3^{k-2} +1 \text{ for some } k\ge 2 \}.$$

To prove our main result, we require a useful proposition. 
\begin{Proposition}\label{prop:boundforodd}
 Let $k>2$. Let $m$ be an odd, composite number not divisible by $3$ at height $k$. Then,
$$m< 2\cdot 3^{k-2}+1.$$
\end{Proposition}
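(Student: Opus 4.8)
The plan is to apply Shapiro's formula (Theorem~\ref{th:hn-formula}) to turn the statement into a clean inequality about how the ``height budget'' $k-1$ is split among the prime factors of $m$, and then to prove that inequality by induction on the number of prime factors. Since $m$ is odd and not divisible by $3$, I write its factorization as $m = q_1 q_2 \cdots q_s$, where the $q_j$ are (not necessarily distinct) primes, each $q_j \ge 5$. Because the exponent of $2$ is $0$, the second case of \eqref{H} gives
\begin{equation*}
H(m) = \sum_{j=1}^s \big( H(q_j) - 1 \big) + 1 = k,
\end{equation*}
so, setting $d_j := H(q_j) - 1$ and $B := k-1$, I have $\sum_{j=1}^s d_j = B$. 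Two hypotheses feed in here: every prime $q_j \ge 5$ lies in $Q_\ell$ for some $\ell \ge 3$ (as $Q_1 = \{2\}$ and $Q_2 = \{3\}$), so $d_j \ge 2$; and $m$ being composite forces $s \ge 2$. Moreover, the observation recorded just before the proposition — that the largest prime at height $\ell$ is at most $2\cdot 3^{\ell-2}+1$ — applied with $\ell = H(q_j) = d_j + 1$ yields the pointwise bound $q_j \le 2 \cdot 3^{d_j - 1} + 1$.

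Writing $g(d) := 2 \cdot 3^{d-1} + 1$, the previous step shows $m \le \prod_{j=1}^s g(d_j)$, while the target is exactly $g(B) = 2\cdot 3^{k-2}+1$. So everything reduces to the strict inequality
\begin{equation*}
\prod_{j=1}^s g(d_j) < g\Big( \sum_{j=1}^s d_j \Big), \qquad s \ge 2,\ \text{each } d_j \ge 2.
\end{equation*}
I would prove this by induction on $s$. The base case $s=2$ is the two-factor inequality $g(a)g(b) < g(a+b)$ for integers $a,b \ge 2$; expanding, it reduces to $3^{a+b-2} > 3^{a-1} + 3^{b-1}$, which follows by taking $a \le b$ and noting $3^{a+b-2} = 3^{a-1}3^{b-1} \ge 3\cdot 3^{b-1} > 2\cdot 3^{b-1} \ge 3^{a-1}+3^{b-1}$. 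The inductive step peels off one factor: $\prod_{j=1}^{s+1} g(d_j) = g(d_{s+1}) \prod_{j=1}^s g(d_j) < g(d_{s+1})\, g(B - d_{s+1}) < g(B)$, applying the induction hypothesis to the first $s$ factors and then the two-factor case (here $d_{s+1} \ge 2$ and $B - d_{s+1} = \sum_{j=1}^s d_j \ge 4$). Combining, $m \le \prod_j g(d_j) < g(B) = 2\cdot 3^{k-2}+1$.

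The step I expect to be the genuine obstacle — and the one that explains every hypothesis — is the two-factor inequality $g(a)g(b) < g(a+b)$. Its success hinges on the coefficient $2$ (rather than $3$) in $g(d) = 2\cdot 3^{d-1}+1$ together with $a,b \ge 2$: the product $g(a)g(b)$ has leading term $4\cdot 3^{a+b-2}$ whereas $g(a+b)$ has leading term $6\cdot 3^{a+b-2}$, leaving a margin of $2\cdot 3^{a+b-2}$ to absorb the cross terms $2\cdot 3^{a-1}+2\cdot 3^{b-1}$, which fits precisely when neither exponent drops to $1$. This is exactly why the hypothesis $3 \nmid m$ is indispensable: the prime $3$ has $d = H(3)-1 = 1$, and then $g(1)g(b) = 2\cdot 3^b + 3 > g(1+b)$, so the inequality reverses — consistent with the fact that $3^{k-1}$, the largest odd number at height $k$, genuinely exceeds $2\cdot 3^{k-2}+1$. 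The compositeness hypothesis $s \ge 2$ is equally essential, since for a single prime one has equality in the extremal case rather than strict inequality.
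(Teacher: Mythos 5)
Your proof is correct and follows essentially the same route as the paper: the two-factor inequality $g(a)g(b)<g(a+b)$ for $a,b\ge 2$ is precisely the computation in Lemma~\ref{lem:pqbound} (with the indices shifted by one), and the rest is the same peeling-off of one prime factor at a time using the additivity of $H$ and the bound on the largest prime at a given height. The only organizational difference is that you induct on the number of prime factors rather than on the height $k$, which has the minor advantage of treating prime and composite cofactors uniformly via $q_j\le g(H(q_j)-1)$, whereas the paper's induction hypothesis is stated only for composite numbers and so does not literally cover the case where $m/p$ is prime.
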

Before proving the proposition, we prove a special case, where $m$ is of the form $pq$ or $p^2$. 
\begin{Lemma} \label{lem:pqbound}
Let $k>2$. Let $p$ and $q$ be (possibly the same) primes. Suppose $p, q\neq 2, 3$, and $H(pq)=k$. Then
$$pq< 2\cdot 3^{k-2}+1.$$
\end{Lemma}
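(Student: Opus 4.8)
The plan is to convert the multiplicative claim about $pq$ into an additive claim about heights using Corollary~\ref{cor:sub-additivity}, and then bound each prime factor individually with the observation (recorded just before the Proposition) that the largest prime at height $j$ is at most $2\cdot 3^{j-2}+1$.

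First I would note that since $p,q\notin\{2,3\}$, and the only primes of height $1$ and $2$ are $2$ and $3$ respectively (indeed $Q_1=\{2\}$ and $Q_2=\{3\}$), both $p$ and $q$ have height at least $3$. Writing $a:=H(p)$ and $b:=H(q)$, Corollary~\ref{cor:sub-additivity} applied to the two odd primes $p$ and $q$ (both having no factor of $2$) gives $H(pq)=H(p)+H(q)-1$, so the hypothesis $H(pq)=k$ becomes $a+b=k+1$ with $a,b\ge 3$. The case $p=q$, i.e.\ $m=p^2$, is covered here without change, since then $a=b$ and the same relation holds.

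Next I would apply the prime bound to each factor: as $p$ is a prime of height $a$ and $q$ a prime of height $b$, we have $p\le 2\cdot 3^{a-2}+1$ and $q\le 2\cdot 3^{b-2}+1$. Multiplying these, and using $k-2=a+b-3$, the target inequality $pq<2\cdot 3^{k-2}+1$ reduces, after expanding the product and cancelling, to
\begin{equation*}
3^{a-2}+3^{b-2}<3^{a+b-4}.
\end{equation*}
Setting $u:=3^{a-2}$ and $v:=3^{b-2}$, both at least $3$ because $a,b\ge 3$, this is precisely $u+v<uv$, equivalently $(u-1)(v-1)>1$, which holds since $(u-1)(v-1)\ge 4$.

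The computation is routine; the only point requiring care---and the one I would present most explicitly---is the bookkeeping that forces $a,b\ge 3$, since this is exactly what makes $u,v\ge 3$ and hence the final inequality strict. (Were a factor permitted to equal $2$ or $3$, one would have $u=1$, so $(u-1)(v-1)=0$ and the bound would collapse; this is why the hypothesis $p,q\neq 2,3$ is essential.) This lemma then serves as the base case $m=pq$ or $m=p^2$ for the induction establishing Proposition~\ref{prop:boundforodd}.
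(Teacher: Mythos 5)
Your proof is correct and follows essentially the same route as the paper's: bound each prime by $2\cdot 3^{H(\cdot)-2}+1$, use Corollary~\ref{cor:sub-additivity} to get $k=a+b-1$, and expand the product. The only cosmetic difference is that you package the final estimate as $u+v<uv$ with $u,v\ge 3$, whereas the paper splits $6\cdot 3^{a+b-4}$ into $4\cdot 3^{a+b-4}+3^{a-2}3^{b-2}+3^{a-2}3^{b-2}$ and compares term by term; these are the same computation.
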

\begin{proof} Let $H(p)=a$ and $H(q)=b$. Since $p$ and $q$ are not $2, 3$, we must have $a,b >2$. Since $p \le 2\cdot 3^{a-2} +1$ and $q \le 2\cdot 3^{b-2} +1$,
we must have
$$pq \le 4\cdot 3^{a+b-4} + 2\cdot 3^{a-2} + 2\cdot 3^{b-2} +1.$$
Now from Corollary~\ref{cor:sub-additivity}, we see that $k=H(pq)=a+b-1$. Now consider 
\begin{align*}
2\cdot 3^{a+b-3}+1 & = 6\cdot 3^{a+b-4} +1 \cr
&= 4\cdot 3^{a+b-4} + 3^{b-2} \cdot 3^{a-2} + 3^{a-2} \cdot 3^{b-2} +1 \cr
& > 4\cdot 3^{a+b-4} + 2\cdot 3^{a-2} + 2\cdot 3^{b-2} +1,
\end{align*}
since $a>2$ implies that $3^{a-2}>2$.
This completes the proof of the lemma. 
\end{proof}
\begin{proof}[Proof of Proposition~\ref{prop:boundforodd}]
We use induction on $k$. For $k=3, 4$, the statement is vacuously true. Let $k\geq 5$. Let $p|m$, $H(p)=a$ and $H(m/p)=b$. Then $a,b >2$, otherwise $m$ has to be divisible by $3$. Further $b<k$ since $a+b-1=k$. Thus by the induction hypothesis, we must have 
$$m/p < 2\cdot 3^{b-2}+1.$$
Of course, since $p$ is a prime,
$p<2\cdot 3^{a-2}+1.$
Now by using the same argument as in Lemma~\ref{lem:pqbound}, we see that
$$m< 2\cdot 3^{a+b-3}+1 =  2\cdot 3^{k-2}+1,$$
as required. 
\end{proof}

Next, we determine all the numbers in the set
$$R_k := \{ n \in C_k : 2^2 3^{k-2}<n\le 2\cdot 3^{k-1}\}.$$ 
These numbers are the largest elements at height $k$. 

\begin{Theorem} Let $k>2$. The set $R_k$ comprises all the numbers of the form $m$, where
$$m = 2\cdot 3^{k-a}p,$$
where $p\in \widehat{P}$, with $H(p)=a$ and $a\le k$. 
\end{Theorem}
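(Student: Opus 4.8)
The plan is to prove the asserted set equality by establishing both inclusions, with essentially all the work lying in the reverse inclusion. For the forward direction, I would fix $p\in\widehat{P}$ with $H(p)=a$ and $2\le a\le k$, set $m=2\cdot 3^{k-a}p$, and verify directly that $m\in R_k$. When $a=2$ we have $p=3$ and $m=2\cdot 3^{k-1}$, the largest number at height $k$, which is handled on its own; when $a\ge 3$, $p$ is an odd prime different from $3$, so Shapiro's formula \eqref{H} gives $H(m)=1+(k-a)(H(3)-1)+(H(p)-1)=k$. Substituting $p=2\cdot 3^{a-2}+1$ yields the closed form $m=4\cdot 3^{k-2}+2\cdot 3^{k-a}$, and the bounds $4\cdot 3^{k-2}<m\le 2\cdot 3^{k-1}$ are then immediate from $2\le a\le k$. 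Hence every number of the stated form lies in $R_k$.

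For the reverse inclusion, fix $n\in R_k$. I would first show that $n$ is even with $2$-adic valuation exactly $1$. Since $n>4\cdot 3^{k-2}>3^{k-1}$ and the largest odd number at height $k$ is $3^{k-1}$, the number $n$ must be even; and if $4\mid n$, then $n/2$ is an even number with $H(n/2)=k-1$ and $n/2>2\cdot 3^{k-2}$, contradicting Pillai's bound that the largest even number at height $k-1$ is $2\cdot 3^{k-2}$. Thus $n=2s$ with $s$ odd, $H(s)=k$, and $2\cdot 3^{k-2}<s\le 3^{k-1}$.

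Next I would analyse the odd number $s$ by separating its power of $3$: write $s=3^{j}u$ with $3\nmid u$. If $u=1$, then $H(s)=k$ forces $s=3^{k-1}$, so $n=2\cdot 3^{k-1}$, which is the $a=2$, $p=3$ case. If $u>1$, then $u\ge 5$ is odd and coprime to $3$, so $b:=H(u)\ge 3$; using $H(3^{j})=j+1$ together with Corollary~\ref{cor:sub-additivity} gives $j+b=k$, whence $u=s/3^{j}>2\cdot 3^{k-2-j}=2\cdot 3^{b-2}$. This strict lower bound is decisive: by Proposition~\ref{prop:boundforodd} every odd composite number coprime to $3$ at height $b$ is at most $2\cdot 3^{b-2}$, so $u$ cannot be composite, while the largest prime at height $b$ is at most $2\cdot 3^{b-2}+1$. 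The integer $u$ is therefore sandwiched into the single value $u=2\cdot 3^{b-2}+1$, which is prime, so $u\in\widehat{P}$ with $H(u)=b=:a$ and $a\le k$. Consequently $n=2s=2\cdot 3^{k-a}u$ has exactly the required shape.

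The main obstacle is the reverse inclusion, and within it the pivotal step is converting the crude size condition $n>4\cdot 3^{k-2}$ into the sharp strict inequality $u>2\cdot 3^{b-2}$ on the $3$-free part $u$; the reduction to $2\,\|\,n$ and to an odd $s$ is routine, but this last inequality is what forces the conclusion. Everything then rests on Proposition~\ref{prop:boundforodd} to exclude composite $u$ and on the largest-prime bound to pin $u$ down, since these two estimates together leave $u=2\cdot 3^{b-2}+1$ as the only possibility, which is precisely the defining equation of $\widehat{P}$.
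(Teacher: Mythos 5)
Your proof is correct, but the hard direction is organized differently from the paper's. The paper proves $R_k\subset S_k$ by induction on $k$, leaning on the generation algorithm of Theorem~\ref{th:algo1}: it first rules out Step~1 as a source of elements of $R_k$, writes $n=2r$ with $r$ odd, uses Proposition~\ref{prop:boundforodd} to conclude that $r$ is either the prime $2\cdot 3^{k-2}+1$ or a multiple of $3$ coming from Step~2, and in the latter case descends to $R_{k-1}$ and invokes the induction hypothesis. You avoid both the induction and the algorithm: after the same reduction to $n=2s$ with $s$ odd (your extra observation that $2\,\|\,n$, via Pillai's bound on the largest even number at height $k-1$, replaces the paper's ``not from Step~1'' argument), you strip off the full power of $3$ at once, writing $s=3^ju$, compute $H(s)=j+H(u)$ from Corollary~\ref{cor:sub-additivity}, and then apply Proposition~\ref{prop:boundforodd} together with the largest-prime bound a single time at the height $b=H(u)$ to force $u=2\cdot 3^{b-2}+1$. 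In effect you have unrolled the paper's inductive descent into one explicit factorization step; what this buys is a self-contained, non-recursive argument that needs only the height formula and the two size estimates, at the cost of the slightly more delicate bookkeeping that the strict inequality $u>2\cdot 3^{b-2}$ survives division by $3^j$ (which it does, since $j=k-b$). Both versions rest on the same two pillars, Proposition~\ref{prop:boundforodd} and the bound $p\le 2\cdot 3^{a-2}+1$ on primes at height $a$, and your forward inclusion matches the paper's, with the closed form $m=4\cdot 3^{k-2}+2\cdot 3^{k-a}$ playing the role of the paper's identity \eqref{sk-ordering}.
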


\begin{proof}
We let $S_k$ denote the set
$$S_k:= 
\{ m: m =  2\cdot 3^{k-a}p \text{ for some } p \in \widehat{P}, \text{ with } H(p)=a \text{ and } a\le k \}.
$$
We want to show that $R_k=S_k$. 

First observe from \eqref{H} that if $m\in S_k$, then $H(m)=k$. Further, if $m \in S_k$, then
$$2^23^{k-2}< m \le 2\cdot 3^{k-1}.$$
This follows from
\begin{equation}\label{sk-ordering}
2\cdot 3^{k-a}p = 2\cdot 3^{k-1}\Big( \frac{2}{3}+\frac{1}{3^{a-1}}\Big).
\end{equation}
Thus $S_k\subset R_k$. 

To show the converse, we apply  an inductive argument using Theorem~\ref{th:algo1}.

For $k=3$, $R_3=\{ 14, 18 \} = S_3.$ So let $k>3$. 

Observe that $R_k$ has only even numbers. This is because all odd numbers at height $k$ are less than or equal to $3^{k-1}$, and $3^{k-1}< 4\cdot 3^{k-2}$. 

Even numbers are obtained from Step 1 or Step 5 in Theorem~\ref{th:algo1}. However, since there are no numbers in $C_{k-1}$ that are bigger than $2\cdot 3^{k-2}$, none of the numbers in $R_k$ are obtained from Step 1. Thus all the numbers in $R_k$ are of the form $2r$, where $r$ is an odd number at height $k$, and 
$$2\cdot 3^{k-2}< r \le 3^{k-1}.$$
By Proposition~\ref{prop:boundforodd}, all odd composite numbers not divisible by $3$ are less than 
$2\cdot 3^{k-2}+1$. Thus there are only two possibilities for $r$.
\begin{enumerate}
\item $r$ is a prime of the form $2\cdot 3^{k-2}+1$ with $H(r)=k$, i.e., $2r\in S_k$ as required. 
\item $r$ is divisible by $3$.
\end{enumerate}
In case $r$ is divisible by $3$, it is obtained from Step 2, and there is an $s$ such that
$2\cdot 3^{k-3} < s \le 3^{k-2}$, with $r=3s$. So $2s\in R_{k-1}$. By the induction hypothesis, 
$2s=2\cdot 3^{k-1-a}p$, for some $p\in \widehat{P}$ with $H(p) = a \le k-1$. So 
$$2r =  2\cdot 3^{k-a}p,$$
and $2r\in S_k$. This completes the proof of $S_k\subset R_k$. 
\end{proof}

To state our next result, we require some notation. 
Let $p_{1},$ $p_{2},$ $p_{3},\dots$ be the elements of $\widehat{P}$ listed in increasing order, where   $H(p_i)=a_i$, 
and $a_1< a_2 < a_3 < \cdots$. For example, the first few pairs $(p_i,a_i)$ are
$(3,2)$, $(7,3)$, $(19,4)$, and $(163,6)$. 

\begin{Corollary} Let $k>2$. Let $p_{i}\in \widehat{P}$, $i=1, 2, \dots, r$, be as above, with $r$ the largest such that
$a_r\le k$.  At height $k$, the largest numbers, in decreasing order, are:
$$2\cdot3^{k-1}, 2\cdot 3^{k-3}\cdot 7,  2\cdot 3^{k-4}\cdot 19, 2\cdot 3^{k-6}\cdot 163
, \dots, 2\cdot 3^{k-a_r}\cdot p_{r}, 2^2\cdot 3^{k-2}.$$

\end{Corollary}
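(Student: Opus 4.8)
The plan is to treat this Corollary as a repackaging of the theorem characterizing $R_k$, together with a short ordering argument and the identification of the one extra number $2^2 3^{k-2}$ sitting just below $R_k$. First I would recall that that theorem gives $R_k = S_k$, so the numbers at height $k$ lying in the interval $(2^2 3^{k-2}, 2\cdot 3^{k-1}]$ are precisely those of the form $2\cdot 3^{k-a}p$ with $p \in \widehat{P}$, $H(p) = a$, and $a \le k$. Running over the admissible $p_i$ with $a_i \le k$ (there are exactly $r$ of them, by the choice of $r$) produces all of these, so the task reduces to sorting them and appending the next number down.

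For the ordering I would invoke the identity \eqref{sk-ordering}, namely $2\cdot 3^{k-a}p = 2\cdot 3^{k-1}\bigl(\tfrac{2}{3} + \tfrac{1}{3^{a-1}}\bigr)$. Since the bracketed quantity is strictly decreasing in $a$, the value $2\cdot 3^{k-a}p$ strictly decreases as $a$ grows. The heights $a_1 < a_2 < \cdots < a_r$ are strictly increasing by definition, so listing the elements of $S_k$ in order of increasing $a_i$ yields their values in strictly decreasing order. The first term comes from $a_1 = 2$, $p_1 = 3$, giving $2\cdot 3^{k-1}$; the subsequent terms are $2\cdot 3^{k-3}\cdot 7$, $2\cdot 3^{k-4}\cdot 19$, $2\cdot 3^{k-6}\cdot 163$, and so on, down to the last admissible pair $(p_r, a_r)$, giving $2\cdot 3^{k-a_r}p_r$.

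It remains to justify the final entry $2^2 3^{k-2}$. I would first check, via Shapiro's formula (Theorem~\ref{th:hn-formula}), that $H(2^2 3^{k-2}) = 2 + (k-2)(H(3)-1) = k$, so this number indeed sits at height $k$. Since every element of $S_k$ strictly exceeds $2^2 3^{k-2}$, and since $R_k = S_k$ already accounts for \emph{all} height-$k$ numbers in $(2^2 3^{k-2}, 2\cdot 3^{k-1}]$, no height-$k$ number can lie strictly between $2^2 3^{k-2}$ and the smallest element $2\cdot 3^{k-a_r}p_r$ of $S_k$. Hence $2^2 3^{k-2}$ is exactly the next number down, completing the decreasing list.

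The point requiring care—rather than any genuine obstacle—is this last step: confirming that $2^2 3^{k-2}$ is the immediate predecessor of $R_k$ and that nothing has been skipped. This is secured entirely by the equality $R_k = S_k$, which guarantees there are no further height-$k$ numbers in the half-open interval above $2^2 3^{k-2}$. Everything else is the bookkeeping of sorting the finitely many elements of $S_k$ by the monotone parameter $a$.
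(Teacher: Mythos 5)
Your proposal is correct and follows essentially the same route as the paper: invoke the theorem identifying $R_k$ with $S_k$ and use the identity \eqref{sk-ordering} to sort the elements by the monotone parameter $a$. The paper's own proof consists only of the ordering observation, so your explicit verification that $H(2^2\cdot 3^{k-2})=k$ and that $R_k=S_k$ leaves no height-$k$ number between $2^2\cdot 3^{k-2}$ and the smallest element of $S_k$ supplies detail the paper leaves implicit, but it is not a different argument.
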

\begin{proof}
Let $m_i=2\cdot 3^{k-a_r}p_{i}$.
Note that from \eqref{sk-ordering}, it follows that if $a_i > a_j$, then $m_ i< m_j$. 
Thus $2\cdot 3^{k-a_i}p_{i}$, $i=1, 2, \dots, r$, are in decreasing order. 
\end{proof}
This immediately implies a similar result for the largest odd numbers at a height. 
\begin{Corollary}\label{cor:shapiro-plus}  Let $k>2$. Let $p_{i}\in \widehat{P}$, $i=1, 2, \dots, r$, be as above, with $r$ the largest such that
$a_r\le k$.   At height $k$, the largest odd numbers, in decreasing order, are:
$$3^{k-1}, 3^{k-3}\cdot 7,  3^{k-4}\cdot 19, 3^{k-6}\cdot 163, 3^{k-a_5}\cdot p_{5},  \dots, 3^{k-a_r}\cdot p_{r}.$$
\end{Corollary}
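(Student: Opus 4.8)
The plan is to derive this corollary from the preceding one together with the single identity $H(2m)=H(m)$, valid whenever $m$ is odd. This identity is exactly the case of Corollary~\ref{cor:sub-additivity} in which the exponent of $2$ in $m$ vanishes: writing $2m=2^1\cdot m$ with $m$ odd gives $H(2m)=H(2)+H(m)-1=H(m)$, since $H(2)=1$. Consequently $m\mapsto 2m$ is a bijection between the odd members of $C_k$ and those members of $C_k$ that are $\equiv 2\pmod 4$. First I would observe that every element of $R_k=S_k$ is of this latter type: by the theorem above each such element equals $2\cdot 3^{k-a}p$ with $p\in\widehat{P}$ an odd prime, so $3^{k-a}p$ is odd and $2\cdot 3^{k-a}p\equiv 2\pmod 4$. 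Hence halving sends $R_k$ injectively into the odd numbers of $C_k$, producing exactly the candidates $3^{k-a}p$ listed in the statement.

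Next I would check that halving $R_k$ captures \emph{all} of the largest odd numbers and no others. If $m\in C_k$ is odd, then $2m\in C_k$ is even, so $2m\in R_k$ precisely when $2^2 3^{k-2}<2m\le 2\cdot 3^{k-1}$, that is, when $2\cdot 3^{k-2}<m\le 3^{k-1}$. Since the largest odd number at height $k$ is $3^{k-1}$ by Observation~6, the upper bound is automatic, and the condition reduces to $m>2\cdot 3^{k-2}$. Thus the odd numbers of $C_k$ exceeding $2\cdot 3^{k-2}$ are exactly the halves of the elements of $R_k$, and by definition these are the largest odd numbers at height $k$.

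Finally I would transfer the ordering. Multiplication by $2$ is strictly increasing, so the decreasing order of the even numbers $2\cdot 3^{k-a_i}p_i$ established in the previous corollary passes verbatim to their halves $3^{k-a_i}p_i$; equivalently one invokes \eqref{sk-ordering} directly. The one point requiring care is the trailing term $2^2 3^{k-2}$ of the previous corollary: it is divisible by $4$, hence \emph{not} of the form $2\times(\text{odd})$, so it has no odd counterpart and correctly drops out of the present list, which is why the odd list terminates at $3^{k-a_r}p_r$ rather than acquiring an extra final entry. I do not anticipate a genuine obstacle here; the only substantive verification is confirming that no element of $R_k$ is divisible by $4$, which is what guarantees the halving map is onto the full set of large odd numbers and makes the two lists match term by term.
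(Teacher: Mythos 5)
Your proof is correct and follows essentially the same route as the paper, which simply asserts that the result on the largest odd numbers "immediately" follows from the preceding corollary via the correspondence $m\leftrightarrow 2m$ for odd $m$ (i.e., $H(2m)=H(m)$ and the fact that every element of $R_k$ is twice an odd number). Your write-up just makes explicit the halving bijection, the threshold computation, and the disposal of the trailing term $2^2\cdot 3^{k-2}$, all of which the paper leaves implicit.
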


\begin{Remark} Corollary~\ref{cor:shapiro-plus} extends Theorems 10, 11 and their corollary  from
Shapiro \cite[\S5]{shapiro1943}. Shapiro considers only two primes in $\widehat{P}$, namely
$7$ and $19$. 
\end{Remark}

To summarize our work so far, we have found in Theorem~\ref{th:algo1}
an alternative way of thinking about the Shapiro classes. We saw above how a rather obvious 
observation, about the largest possible prime in a class, can be used to obtain more information 
about  the numbers that appear at a height. 

At this point, we would like to venture a comment of a philosophical nature, motivated by another innocuous observation about primes in Shapiro classes.

Theorem~\ref{th:hn-formula} suggests that $H(n)$ is a ``measure of complexity'' of a number. The prime numbers can be considered the
``atoms'' of numbers. A number is built from $1$ by successive multiplication by prime 
numbers, so the number of prime powers dividing a number says something about how complicated a number is.  
However, this construction does not distinguish between two primes. 
On the other hand, the Shapiro class structure naturally distinguishes between the primes.
On looking at Table~\ref{classes}, we see that 
primes don't come in order. For example, $19$ appears at height $4$ and $17$ at height $5$. 
Thus, the height function gives a ``measure of complexity'' to each prime, and indeed, to each number. 
That is 
why we expect this construct will say something about prime numbers. 

\section{Chebyshev-type theorems}\label{sec:chebyshev}
In this section, we explore one strategy to discover what Shapiro classes imply for prime numbers.  The strategy is to study elementary methods explained in 
Shapiro \cite[Chapter 9]{shapiro1983} and Apostol \cite[Chapter 4]{apostol1976}, and express classical results using $H(n)$ and $S(n)$. The objective of this preliminary investigation is to arrive at suitable functions that are related to the prime number functions $\pi(n)$ and $p_n$. 

We derive results analogous to Chebyshev's theorem,  which states that there are constants $0<a<A$ such that, for $n>1$, 
\begin{equation}\label{chebyshev}
a \frac{n}{\log n} <\pi(n) < A \frac{n}{\log n}.
\end{equation}
According to Apostol \cite[Theorem 4.6, (14) and (18)]{apostol1976}, we can take $a=1/4$ and $A=6$, when $n$ is an even number. 

We will use this result to provide an alternate formulation of
Chebyshev's theorem in terms of $S(n)$. In addition, we  
find inequalities for $p_n$ by modifying the proof of Apostol \cite[Theorem 4.7]{apostol1976} appropriately. 

We begin with two preliminary lemmas. 
\begin{Lemma}\label{prop:sn} For $n>1$, we have 
$$ \frac{\sumH(n)}{n} \ge \frac{\log n/2}{\log 3}.$$
\end{Lemma}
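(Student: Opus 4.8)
The plan is to bound $\sumH(n)=\sum_{k=1}^n H(k)$ from below by exploiting the lower bound on each individual $H(k)$ already recorded in the excerpt. From Pillai's inequality \eqref{pillai}, we have for every $k>1$ the bound
\[
H(k)\ge \frac{\log(k/2)}{\log 3}+1,
\]
and I would first check that this also holds harmlessly for $k=1$ (where $H(1)=0$), or simply handle $k=1$ separately since its contribution is $0$. Summing over $k=1,\dots,n$ then gives
\[
\sumH(n)=\sum_{k=1}^n H(k)\ge \sum_{k=2}^n\left(\frac{\log(k/2)}{\log 3}+1\right).
\]
The goal reduces to showing this sum is at least $\dfrac{n\log(n/2)}{\log 3}$, i.e. that the average of the per-term lower bounds is at least $\dfrac{\log(n/2)}{\log 3}$.

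The key comparison I would carry out is between $\sum_{k=2}^n \log(k/2)$ and $n\log(n/2)$. Writing $\sum_{k=2}^n\log(k/2)=\log\!\big(n!/2^{n-1}\big)$ up to the $k=1$ adjustment, the natural tool is a convexity or integral estimate: since $\log(x/2)$ is increasing, $\sum_{k=2}^n\log(k/2)\ge \int_1^n \log(x/2)\,dx$, and the integral evaluates to $n\log(n/2)-n+1+\log 2$ or similar. The slack term $+1$ appearing in each summand (contributing $(n-1)$ total, divided out) is exactly what I expect to absorb the deficit $-n$ coming from the integral of $\log$, so that after dividing by $n\log 3$ the inequality closes.

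The main obstacle will be the bookkeeping at the endpoints and confirming that the linear ``$+1$'' terms genuinely dominate the $-n$ error from integrating the logarithm, uniformly for all $n>1$ rather than just asymptotically. I would verify the small cases ($n=2,3$, say) directly, since the integral estimate is weakest there, and then use monotonicity of the difference between the two sides to extend to all larger $n$. I do not expect any deep idea beyond Pillai's bound plus a careful integral/convexity comparison; the entire content is packaging the elementary per-term inequality and checking the constants line up.
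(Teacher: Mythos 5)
Your proposal is correct and follows essentially the same route as the paper: both proofs sum Pillai's lower bound \eqref{pillai} over $k=2,\dots,n$ and then verify that the accumulated $+1$ terms absorb the deficit in estimating $\sum_{k=2}^n\log(k/2)=\log\bigl(n!/2^{n-1}\bigr)$, the point being that $\log 3>1$ so that $(n-1)(1-1/\log 3)+\log 2/\log 3\ge 0$. The only difference is in one technical step: the paper invokes Robbins's refinement of Stirling's formula to bound $\log n!$ from below, whereas you use the monotone-integrand comparison $\sum_{k=2}^n\log(k/2)\ge\int_1^n\log(x/2)\,dx$, which is slightly more elementary and entirely sufficient here.
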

\begin{proof} 
We use the following refinement of Stirling's formula due to Robbins \cite[(1) and (2)]{robbins1955}:
\begin{equation}\label{robbins}
\sqrt{2\pi n} \Big( \frac{n}{e}\Big)^n e^{\frac{1}{12n+1} }
<
n!
<
\sqrt{2\pi n} \Big( \frac{n}{e}\Big)^n e^{\frac{1}{12n}} .
\end{equation}
From \eqref{pillai} it is immediate that
$$S(n) = \sum_{k=2}^n H(k) \ge
\frac{1}{\log 3} \left( \log n!  -(n-1)\log 2\right) + n-1.$$
Thus, using \eqref{robbins}, we obtain, for $n>1$,
\begin{align*}
\frac{S(n)}{n} & \ge
\frac{\log n}{\log 3}- \frac{\log 2}{\log 3} +\frac{1}{n}\cdot\Big(\frac{\log (2\pi n)}{2\log 3} +
\frac{\log 2}{\log 3} - 1\Big)  \cr
&\hspace{0.75in}
+\frac{1}{(\log 3)n (12n+1)} +\Big( 1 -\frac{1}{\log 3}\Big)
\cr
&\ge 
\frac{\log n}{\log 3}- \frac{\log 2}{\log 3}
\end{align*}
 as required.
\end{proof}
We require one more lemma. 
\begin{Lemma}
Let $n$ be such that $2^{k-1}< n \le 2^k$.  Let $\beta = \log 2/2\log 3\approx 0.31546\dots $. Then
\begin{equation}\label{sn-by-n-ineq}
\beta (k-2) \le \frac{S(n)}{n} \le k.
\end{equation}
\end{Lemma}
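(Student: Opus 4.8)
The plan is to prove the two inequalities in \eqref{sn-by-n-ineq} separately, and in each case to reduce the statement to a result already in hand. For the upper bound $S(n)/n \le k$, the key point is that every summand of $S(n) = \sum_{j=1}^n H(j)$ is controlled by $k$: since $n \le 2^k$, every index $j$ with $1 \le j \le n$ satisfies $j \le 2^k$, so Observation~\ref{obs:hnlen} gives $H(j) \le k$. Adding the $n$ inequalities $H(j) \le k$ yields $S(n) \le nk$, and dividing by $n$ gives the claim. (The term $H(1)=0$ causes no trouble.)

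For the lower bound I would invoke Lemma~\ref{prop:sn} directly. When $k \ge 3$ we have $n > 2^{k-1} \ge 4 > 1$, so Lemma~\ref{prop:sn} applies and gives $S(n)/n \ge \log(n/2)/\log 3$. Using the hypothesis $n > 2^{k-1}$, hence $n/2 > 2^{k-2}$ and $\log(n/2) > (k-2)\log 2$, I substitute to obtain
$$\frac{S(n)}{n} > \frac{(k-2)\log 2}{\log 3} = 2\beta(k-2),$$
since $\beta = \log 2/(2\log 3)$. Because $2\beta(k-2) \ge \beta(k-2)$ for $k \ge 2$, this is in fact stronger than the stated bound.

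The only remaining care is with the small values $k \le 2$, where the factor $k-2$ is nonpositive; there $\beta(k-2) \le 0 \le S(n)/n$, so the lower bound is immediate and the finitely many pairs $(k,n)$ can be checked directly if desired. I do not anticipate any genuine obstacle: the content of the lemma is just the translation of the two-sided estimate $2^{k-1} < n \le 2^k$ through the average-height lower bound of Lemma~\ref{prop:sn} and the pointwise upper bound of Observation~\ref{obs:hnlen}. The slack between the stated coefficient $\beta$ and the sharper $2\beta$ that the argument actually delivers suggests the weaker form was recorded deliberately, presumably for convenience in the subsequent Chebyshev-type estimates.
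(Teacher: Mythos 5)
Your proposal is correct and takes essentially the same route as the paper: the upper bound comes from Observation~\ref{obs:hnlen} (every $m\le n\le 2^k$ has $H(m)\le k$, so $S(n)\le nk$), and the lower bound comes from Lemma~\ref{prop:sn}. The only difference is that the paper applies Lemma~\ref{prop:sn} at $2^{k-1}$ and then divides by $2^k$, deliberately giving up a factor of $2$ to land exactly on $\beta(k-2)$, whereas your direct application at $n$ yields the sharper $2\beta(k-2)$ for $k\ge 2$; as you note, the weaker stated form suffices for the later Chebyshev-type estimates.
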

\begin{proof}
Since $n> 2^{k-1}$, we have
$$\frac{S(n)}{n} \ge  \frac{S(2^{k-1})}{n} \ge \frac{S(2^{k-1})}{ 2^{k}}
\ge \frac{1}{2}\Big(\frac{ \log 2^{k-2}}{\log 3}\Big), $$
where we have used Lemma~\ref{prop:sn}. 
In this manner, we obtain:
$$\frac{S(n)}{n} \ge \beta(k-2),$$ 
where $\beta = \log 2/{2\log 3} $. This proves the first inequality.

Since $n$ is such that $n\leq 2^k$, then in view of Observation~\ref{obs:hnlen} in 
\S\ref{sec:formula}, we must have
$$H(m)\le k \text{ for all } m\le n.$$
The second inequality follows immediately from this. 
\end{proof}

\begin{Theorem}[A Chebyshev-type Theorem]\label{chebyshev-BB} For $n>2$,  there are constants $a$ and $A$ such that
\begin{equation*}
a\frac{n^2}{S(n)}  
\le  \pi(n)
\le
A\frac{n^2}{S(n)}  .
\end{equation*}
\end{Theorem}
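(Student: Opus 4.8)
The plan is to reduce the claimed inequality to the classical Chebyshev bounds \eqref{chebyshev} by means of the two preceding lemmas. The key observation is that $S(n)/n$ is comparable to $\log n$: the two lemmas together sandwich $S(n)/n$ between two positive constant multiples of $\log n$. Once this is in hand, the quantity $n^2/S(n) = n/(S(n)/n)$ is comparable to $n/\log n$, and so Chebyshev's theorem for $\pi(n)$ transfers directly to a statement in terms of $S(n)$. I will write $a_0, A_0$ for the Chebyshev constants of \eqref{chebyshev} to avoid collision with the constants $a, A$ in the theorem, and I will use the general form \eqref{chebyshev}, valid for all $n>1$, rather than Apostol's explicit even-$n$ constants.

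First I would record the two-sided estimate $C_1\log n \le S(n)/n \le C_2\log n$. For the lower bound, Lemma~\ref{prop:sn} gives
$$\frac{S(n)}{n} \ge \frac{\log(n/2)}{\log 3} = \frac{1}{\log 3}\Big(1 - \frac{\log 2}{\log n}\Big)\log n.$$
The coefficient $\frac{1}{\log 3}\big(1 - \frac{\log 2}{\log n}\big)$ is increasing in $n$, hence for every integer $n>2$ it is bounded below by its value at $n=3$; calling this positive constant $C_1$, we get $S(n)/n \ge C_1\log n$. For the upper bound, given $n$ choose $k$ with $2^{k-1} < n \le 2^k$; then \eqref{sn-by-n-ineq} gives $S(n)/n \le k$, and since $k = \lceil \log_2 n\rceil < \frac{\log n}{\log 2}+1$, the ratio $k/\log n$ is bounded (it is largest at the smallest admissible $n$), so $k \le C_2\log n$ for all $n>2$ with a constant $C_2$.

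Inverting these inequalities yields $\frac{n}{C_2\log n} \le \frac{n^2}{S(n)} \le \frac{n}{C_1\log n}$. I would then combine with \eqref{chebyshev}. The upper Chebyshev bound $\pi(n) < A_0\,\frac{n}{\log n}$, together with $\frac{n}{\log n} \le C_2\,\frac{n^2}{S(n)}$, gives $\pi(n) \le A_0 C_2\,\frac{n^2}{S(n)}$; the lower bound $\pi(n) > a_0\,\frac{n}{\log n}$, together with $\frac{n}{\log n} \ge C_1\,\frac{n^2}{S(n)}$, gives $\pi(n) \ge a_0 C_1\,\frac{n^2}{S(n)}$. Setting $A = A_0 C_2$ and $a = a_0 C_1$ then completes the proof.

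The routine-but-delicate point, and really the only obstacle, is ensuring that the comparison constants $C_1, C_2$ are \emph{uniform} over all integers $n>2$. The lower direction is safe because the coefficient appearing in Lemma~\ref{prop:sn} is monotone and strictly positive already at $n=3$; the upper direction rests on the elementary fact that $k=\lceil\log_2 n\rceil$ grows like $\log n$ with a bounded ratio. Everything downstream is the standard algebraic manipulation of Chebyshev's inequalities, so no genuinely new difficulty arises.
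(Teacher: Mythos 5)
Your proof is correct and follows essentially the same route as the paper's: both arguments reduce the statement to Chebyshev's inequality \eqref{chebyshev} by sandwiching $S(n)/n$ between constant multiples of $\log n$ via Lemma~\ref{prop:sn} and \eqref{sn-by-n-ineq}. The only (cosmetic) difference is that you apply \eqref{chebyshev} at $n$ itself and compare $S(n)/n$ with $\log n$ directly, whereas the paper applies it at the surrounding powers of two $2^{k-1}<n\le 2^k$ and works with the exponent $k$ throughout; your version in fact sidesteps the small-$k$ degeneracy ($k=2$) that the paper's displayed lower bound glosses over.
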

\begin{proof}
 Let $n>2$, and
 $k$ be such that $2^{k-1}< n \le 2^{k}$. The inequalities \eqref{chebyshev} imply that there are constants $a^\prime$ and $A^\prime$ such that:
\begin{equation}\label{basic-ineq}
a^\prime\frac{2^{k-1}}{k-1}\le \pi(2^{k-1}) \le \pi(n) \le \pi(2^{k}) \le A^\prime \frac{2^k}{k}.
\end{equation}
Now using \eqref{sn-by-n-ineq}, we obtain
 \begin{align*}
 \pi(n) &\ge a^\prime \Big(\frac{k-2}{k-1}\Big) \frac{2^{k-1}}{k-2} 
 \ge \frac{a^\prime \beta}{4} \cdot \frac{n^2}{S(n)}\\
 \intertext{and,}
 \pi(n) &\le  {2A^\prime} \frac{n^2}{S(n)}.
 \end{align*}
This completes the proof of the theorem. 
\end{proof}
\begin{Remarks}\ 
\begin{enumerate}
\item By following the proof of \eqref{chebyshev} in Shapiro \cite[Chapter 9]{shapiro1983} or Apostol \cite[Chapter 4]{apostol1976}, we can obtain \eqref{chebyshev} in the special case when $n$ is a power of $2$; and from there, for all values of $n$.
\item  We can obtain values for $a$ and $A$ in the statement of 
Theorem~\ref{chebyshev-BB} by taking $a^\prime=1/4$ and $A^\prime=6$ (or perhaps even better values, closer to $1$). However, the purpose here is to find a suitable function that can be related to $\pi(n)$. The function is evidently $F(n)=n^2/S(n)$.
\end{enumerate}
\end{Remarks}

\begin{Theorem}\label{chebyshev-pn} For $n>2$,  there are constants $a$, $A_1$ and $A_2$,
 such that
\begin{equation*}
a {S(n)}  
\le  p_n
\le
A_1 {S(n)} + A_2 n .
\end{equation*}
\end{Theorem}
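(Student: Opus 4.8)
The strategy is to transfer the classical Chebyshev-type bounds on the $n$th prime into the language of $S(n)$, using the two inequalities \eqref{sn-by-n-ineq} as the dictionary between $S(n)/n$ and the exponent $k$ determined by $2^{k-1}<n\le 2^k$. Apostol's treatment (Theorem 4.7) gives bounds of the shape $c_1\, n\log n \le p_n \le c_2\, n\log n$ for $n$ large; so the task reduces to showing that $S(n)$ is, up to multiplicative constants and an additive $O(n)$ term, comparable to $n\log n$. This is exactly what Lemma~\ref{prop:sn} and the bracketing inequality \eqref{sn-by-n-ineq} are set up to deliver.

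**Lower bound.** First I would establish $a\,S(n)\le p_n$. From \eqref{sn-by-n-ineq} we have $S(n)\le k n$, and since $n\le 2^k$ we get $k\ge \log n/\log 2$, so $S(n)\le n\,k$ where $k$ is of size roughly $\log n/\log 2 + 1$. Converting: because $2^{k-1}<n$, one has $k-1<\log n/\log 2$, hence $S(n) \le kn \le n\big(\tfrac{\log n}{\log 2}+1\big)$, which is $O(n\log n)$. On the other hand the classical lower bound gives $p_n \ge c_1\, n\log n$ for a suitable constant. Combining these two, $p_n \ge c_1 n\log n \ge a\, S(n)$ for an appropriate $a>0$ (absorbing the constants and handling the small cases $n$ near the threshold separately). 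The clean way is to bound $S(n)\le n\log n/\log 2 + n$ and then compare directly against $c_1 n\log n$.

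**Upper bound.** For $p_n \le A_1 S(n)+A_2 n$, I would run the dictionary the other way. The first inequality in \eqref{sn-by-n-ineq} gives $S(n)\ge \beta(k-2)n$, i.e. $k \le \dfrac{S(n)}{\beta n}+2$, so $k$ is bounded above by a constant multiple of $S(n)/n$ plus a constant. Apostol's upper bound $p_n \le c_2\, n\log n$ can be written as $p_n \le c_2' \, n k$ (since $\log n \le k\log 2$), and substituting the bound on $k$ yields
\begin{equation*}
p_n \le c_2' n k \le c_2' n\Big(\frac{S(n)}{\beta n}+2\Big) = \frac{c_2'}{\beta}\,S(n) + 2c_2'\, n,
\end{equation*}
which is of the required form $A_1 S(n)+A_2 n$. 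This mirrors the structure of the proof of Theorem~\ref{chebyshev-BB}, where \eqref{sn-by-n-ineq} was used in precisely the same bracketing fashion.

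**Main obstacle.** The routine calculations are the easy part; the real care is needed in two places. First, one must cite or re-derive the classical two-sided bound $p_n \asymp n\log n$ (Apostol's Theorem 4.7) in exactly the form with explicit constants, and check that the additive $A_2 n$ term in the upper bound is genuinely unavoidable — it arises from the $+2$ in the estimate $k\le S(n)/(\beta n)+2$, which is why the statement is not a clean two-sided multiple of $S(n)$ as in Theorem~\ref{chebyshev-BB}. Second, the inequalities \eqref{sn-by-n-ineq} degrade for small $k$ (the lower bound $\beta(k-2)$ is weak or negative when $k\le 2$), so the argument is asymptotic and the finitely many small values of $n$ must be absorbed into the constants; adjusting $a$, $A_1$, $A_2$ to cover the base cases $n$ just above $2$ is the fiddly bookkeeping step rather than a conceptual difficulty.
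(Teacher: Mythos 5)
Your proof is correct, and it uses the same dictionary as the paper: the bracketing $\beta(k-2)\le S(n)/n\le k$ from \eqref{sn-by-n-ineq}, applied in one direction for the lower bound on $p_n$ and in the other for the upper bound. The genuine difference is in the classical input. The paper does not cite Apostol's Theorem 4.7 as a black box; it only uses the Chebyshev bounds for $\pi$ at powers of two, \eqref{basic-ineq}, and therefore must introduce a second dyadic exponent $K$ with $2^{K-1}<p_n\le 2^{K}$ and then convert the resulting $K$-dependence into $k$-dependence. That conversion is the one delicate point of the paper's argument: it runs through $K-1\le \log m/\log 2\le 2\sqrt{m}/(e\log 2)$ to conclude $\sqrt{m}=O(n)$ and hence $\log m\le 2k\log 2+O(1)$ --- which is essentially Apostol's own device for deducing the $p_n$ bounds from the $\pi$ bounds. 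By invoking the two-sided classical bound $p_n\asymp n\log n$ directly, you bypass that step, and your upper-bound chain $p_n\le c_2' nk\le (c_2'/\beta)S(n)+2c_2'n$ is shorter and cleaner; the only cost is assuming a standard result that the paper chooses to rederive in situ from \eqref{chebyshev}. Your closing remarks are also on target: the additive $A_2 n$ term does come from the $+2$ in $k\le S(n)/(\beta n)+2$ (the paper's displayed computation writes $+1$ there, which looks like a slip, though it is harmless for the statement), and the finitely many small $n$ are absorbed into the constants exactly as you say.
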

\begin{proof}
The proof is analogous to the proof of the inequalities
$$\frac{1}{6} n\log n < p_n < 12\big(n\log n + n\log(12/e)\big),$$
given by Apostol \cite[Theorem 4.7]{apostol1976}.

Let $m=p_n$, so $\pi(p_n)=n$. Let $K$ be such that $2^{K-1}<p_n =m \le 2^{K}$, and
$k$ such that $2^{k-1}< n \le 2^k$.  Clearly, $k\le K$ (since $n< p_n$). 

We begin with the first inequality.  The inequalities \eqref{basic-ineq} imply that there is a constant  $A^\prime$ such that
$$n = \pi(p_n) \le A^{\prime} \frac{2^{K}}{K} \le 2A^{\prime} \frac{m}{K} ,$$
or
$$p_n =m \ge \frac{K n}{2A^\prime}  \ge \frac{ k n}{2A^\prime}.$$
But $k\ge S(n)/n$ by
\eqref{sn-by-n-ineq}, so we obtain
$$p_n\ge a S(n),$$
where $a=1/2A^\prime$.  

For the second inequality, \eqref{basic-ineq} implies that for some $a^\prime$,
$$ p_n= m\le \frac{2n (K-1)}{a^\prime} .$$
Next, using $K-1\le \log m/\log 2$, we obtain
$$K-1\le \frac{\log m}{\log 2} \le \frac{2\sqrt{m} }{e\log 2} ,$$
so 
$$m\le \frac{4n \sqrt{m}}{a^\prime e\log 2} ,$$
which yields,
$$\sqrt{m} \le \frac{4n}{a^\prime e\log 2} .$$
Taking logs, we find that
$$\log m \le 2 \log n + 2 \log ({4}/{a^\prime e\log 2}) \le 2 k \log 2 + 2 \log ( {4}/{a^\prime e\log 2}) .$$
Finally, we put all the above together,  to find that
\begin{align*}
p_n (=m) &\le \frac{2n(K-1)}{a^\prime} \\
&\le    \frac{2n\log m }{a^\prime \log 2} \\
&\le    \frac{4 n k }{a^\prime } 
+ \frac{4  \log({4}/{a^\prime e\log 2})  }{a^\prime \log 2}  n \\
&\le \frac{4 n  }{a^\prime }  \bigg( \frac{S(n)}{\beta n} +1 \bigg)
+\frac{4  \log({4}/{a^\prime e\log 2})  }{a^\prime \log 2}  n \cr
&= A_1 S(n) + A_2 n,
\end{align*}
for some constants $A_1$ and $A_2$. This completes the proof. 
\end{proof}

To summarize, we obtained two Chebyshev-type theorems, one for $\pi(n)$ and the other for 
$p_n$.  Of course, the first such theorem came up in response to Gauss' conjecture which said the 
 constants $a$ and $A$ in \eqref{chebyshev} are both $1$. The question arises: how good are these functions
 in approximating $\pi(n)$ and $p_n$? These questions are considered in the next section.


\section{(Conjectural) formulas for prime numbers}\label{sec:conjectures}
In this section we note some conjectural formulas that are motivated by Theorems~\ref{chebyshev-BB} and \ref{chebyshev-pn} 
 and present some computational evidence. 
We note here a particular constant $B$ that appears in our study:
\begin{equation}\label{ConstantB}
B:= \frac{\gamma}{\log 2} \approx 0.832746\dots ,
\end{equation}
where $\gamma$ is the Euler-Mascheroni constant.
\subsection*{A formula for $\pi(n)$}
In view of Theorem~\ref{chebyshev-BB}, the first question we 
investigate is: If
$\pi(n)$ is approximately a constant multiple of $F(n)={n^2}/{\sumH(n)}$, then what should that constant be? However, initial experiments on Sage \cite{sage} with various numerical guesses did not match the data as $n$ became large. 
However, on graphing the difference of $\pi(n)$ with $F(n)$, the error term appears to be of the same type as $F(n)$ itself. This leads to the following conjecture:
\begin{Conjecture} Let $G(n)$ be defined as
\begin{equation}\label{def:G}
G(n) =  \frac{n^2}{\sumH(n)} +  \frac{(bn)^2}{\sumH(\lfloor bn \rfloor)}.
%
\end{equation}
Then $G(n)\sim \pi(n)$, for a constant $b$, where $b$ is (approximately) $$b\approx {e^B}/{10}\approx 0.22996\dots.$$   
\end{Conjecture}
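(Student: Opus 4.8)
The plan is to reduce the conjecture to a single precise asymptotic for $\sumH(n)$ and then read off the constant $b$ from the prime number theorem. Since $\pi(n)\sim n/\log n$ holds unconditionally, the assertion $G(n)\sim\pi(n)$ is equivalent to the purely ``internal'' statement $G(n)\sim n/\log n$, which depends only on the growth of $\sumH$. So the first---and decisive---step is to establish a formula $\sumH(n)\sim c\,n\log n$ and to determine the constant $c$.

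To get at $\sumH(n)$ I would exploit the near-additivity of $H$ supplied by Theorem~\ref{th:hn-formula} and Corollary~\ref{cor:sub-additivity}. Writing $n=2^{\alpha}\prod_i p_i^{\alpha_i}$, the formula \eqref{H} gives $H(n)=\sum_i \alpha_i\bigl(H(p_i)-1\bigr)$ plus a contribution from the $2$-part and the odd/even correction that is $O(\log n)$ uniformly. Summing over $k\le n$ and interchanging the order of summation (here $v_p(k)$ is the exponent of $p$ in $k$), the main term becomes
\begin{equation*}
\sum_{k\le n}\ \sum_{p\mid k,\ p>2} v_p(k)\bigl(H(p)-1\bigr)
=\sum_{3\le p\le n}\bigl(H(p)-1\bigr)\sum_{j\ge 1}\Bigl\lfloor \frac{n}{p^{\,j}}\Bigr\rfloor
= n\sum_{3\le p\le n}\frac{H(p)-1}{p-1}+O(n),
\end{equation*}
so that, up to lower-order terms, $\sumH(n)= n\sum_{p\le n}\frac{H(p)-1}{p}+\cdots$. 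Everything therefore hinges on the weighted average $\sum_{p\le n}(H(p)-1)/p$. Using the Pillai bounds \eqref{pillai} together with Mertens' estimate $\sum_{p\le n}(\log p)/p\sim\log n$ shows at once that this sum is $\asymp\log n$, recovering $\sumH(n)\asymp n\log n$ in line with Lemma~\ref{prop:sn}; the task is to promote this to a genuine asymptotic with an explicit constant.

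The hard part is exactly the evaluation of $c=\lim_{n\to\infty}\frac{1}{\log n}\sum_{p\le n}\frac{H(p)-1}{p}$. Because $H(p)=H(p-1)+1$, this is a statement about the average size of the iterated-totient height along the shifted primes $p-1$, and it reduces to the normal and average order of $H$ itself---precisely the circle of questions studied by Erd\H{o}s, Granville, Pomerance and Spiro \cite{EGPS1990}. I would invoke (or, failing an unconditional proof, conjecture) an EGPS-type result asserting that $H(m)/\log m$ has a limiting mean value, feed it back through the prime sum to pin down $c$, and verify that $c$ lies in the interval $[1/\log 3,\,1/\log 2]$ forced by \eqref{pillai}. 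This self-referential structure---$\sumH$ is governed by $H$ on primes, which is governed by $H$ on the still-smaller numbers $p-1$---is the principal obstacle, and is the reason the statement is offered as a conjecture rather than a theorem.

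Granting $\sumH(m)\sim c\,m\log m$, the conclusion is routine: the definition \eqref{def:G} gives
\begin{equation*}
G(n)=\frac{n^2}{\sumH(n)}+\frac{(bn)^2}{\sumH(\lfloor bn\rfloor)}
\sim\frac{n}{c\log n}+\frac{bn}{c\log n}=\frac{(1+b)\,n}{c\log n},
\end{equation*}
so $G(n)\sim n/\log n\sim\pi(n)$ precisely when $c=1+b$, i.e.\ $b=c-1$. Comparing with the numerically observed $b\approx e^{B}/10$ from \eqref{ConstantB} then amounts to checking $c\approx 1+e^{B}/10\approx 1.23$, comfortably inside the admissible range. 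I expect the clean constant $c$ (hence $b=c-1$) can in principle be extracted from the second-order behaviour of $\sumH$, but that the specific closed form $e^{B}/10$---especially the factor $1/10$---is an empirical fit; matching it exactly is almost certainly not derivable, and a rigorous theorem would at best give $G(n)\sim\pi(n)$ for $b=c-1$ while leaving $c-1=e^{B}/10$ as a numerical coincidence. Finally, to justify the two-term shape of $G$ rather than a single rescaled term, I would compare the next-order terms of $\li(n)$ with the second-order expansion of $n^2/\sumH(n)$, showing that the offset $bn$ inside the second copy is what aligns the $n/(\log n)^2$ corrections and so minimises the error $G(n)-\pi(n)$.
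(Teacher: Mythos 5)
The statement you were asked to prove is offered in the paper as a \emph{Conjecture}: the authors give no proof, only numerical evidence (the tables and figures comparing $G(n)$ with $\pi(n)$ and $\li(n)$) together with a consistency check against the Erd\H{o}s--Granville--Pomerance--Spiro conjecture $S(n)\sim\delta\, n\log n$, from which they extract $\delta=1+b$. Your proposal is therefore being measured against a heuristic rather than a proof, and at the level of the asymptotic relation it reproduces that heuristic exactly, in the reverse direction: you posit $S(n)\sim c\,n\log n$ and deduce $G(n)\sim(1+b)n/(c\log n)$, so that $G(n)\sim\pi(n)$ iff $c=1+b$; the paper instead assumes $G(n)\sim\pi(n)$, inverts \eqref{def:G} to get $n^2/S(n)\approx\sum_{k\ge0}(-1)^k\pi(b^kn)$, and concludes $\delta=1+b$. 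What you add that the paper does not spell out is a concrete route toward the EGPS asymptotic, namely $S(n)=n\sum_{3\le p\le n}(H(p)-1)/(p-1)+O(n)$ via Theorem~\ref{th:hn-formula}; that identity is correct and is essentially how the problem is attacked in \cite{EGPS1990}. You also make the sharp (and correct) observation that, at the level of $\sim$, the two-term shape of $G$ and the particular value of $b$ carry no information beyond the single relation $c=1+b$ --- a one-term $\frac{1+b}{c}\cdot n^2/S(n)$ would serve as well --- so the real content of the conjecture lies in the second-order agreement visible in Table~\ref{table:pi_comparison}, which your final paragraph gestures at but does not develop.

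The irreducible gap, which you name honestly, is the existence and evaluation of $c=\lim_{n\to\infty}\frac{1}{\log n}\sum_{p\le n}\frac{H(p)-1}{p}$. This is open: \cite{EGPS1990} obtain $S(n)\sim\delta\, n\log n$ only under a strong form of the Elliott--Halberstam conjecture, and even conditionally the constant arises from a self-referential construction rather than a closed form. Nothing in your argument (nor in the paper) connects that constant to $e^{B}/10$ with $B=\gamma/\log 2$; the paper's own data in Table~\ref{table:S-nthprime2} only indicate $\delta>1.175$ and still increasing at $n\approx 6\times10^7$. So your proposal is a correct conditional reduction together with an accurate diagnosis of why the statement is a conjecture; it is not, and at present could not be, a proof, and you should state explicitly that the step ``invoke an EGPS-type mean value for $H$'' is itself an open conjecture rather than an available lemma.
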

\begin{figure}[h]
\begin{center}
\includegraphics[scale=0.5]{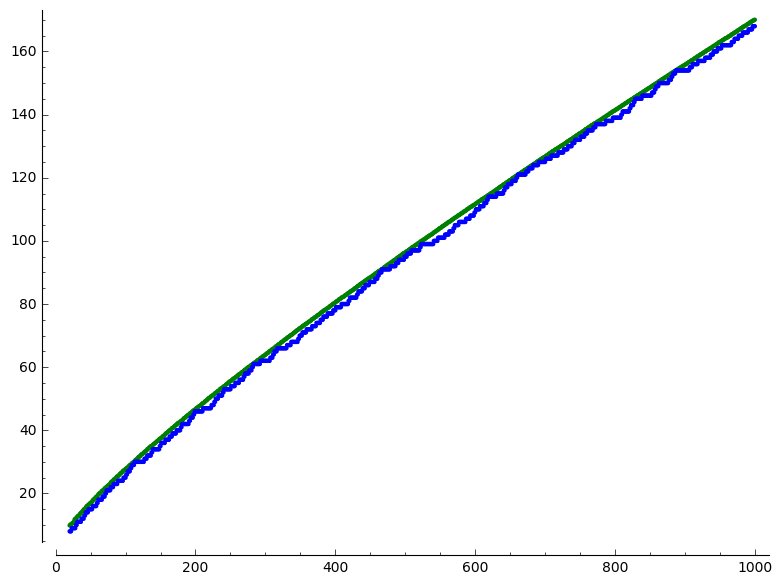}
\caption{Graph of $G(n)$ (top) against $\pi(n)$ }
\end{center}
\label{fig:pi_estimate}
\end{figure}
\subsection*{Notes}
\begin{enumerate}
\item Figure~\ref{fig:pi_estimate} shows the graphs of $\pi(n)$ and $G(n)$ on the same set of axes for $n\le 1000$. The agreement is quite striking.  
\item Figure~\ref{fig:pi_estimate_li} shows two curves $\pi(n)/G(n)$ (top) and $\pi(n)/\li(n)$ (bottom) for $n=20$ to $n=90000$. It appears that $G(n)$ is a better estimate than $\li(n)$ for ``small'' values of $n$. 

\item Table~\ref{table:pi_comparison} 
compares the performance of $G(n)$ with that of $\li(n)$ for some values of $n$.  
\end{enumerate}

\begin{figure}[h]
\begin{center}
\includegraphics[scale=0.5]{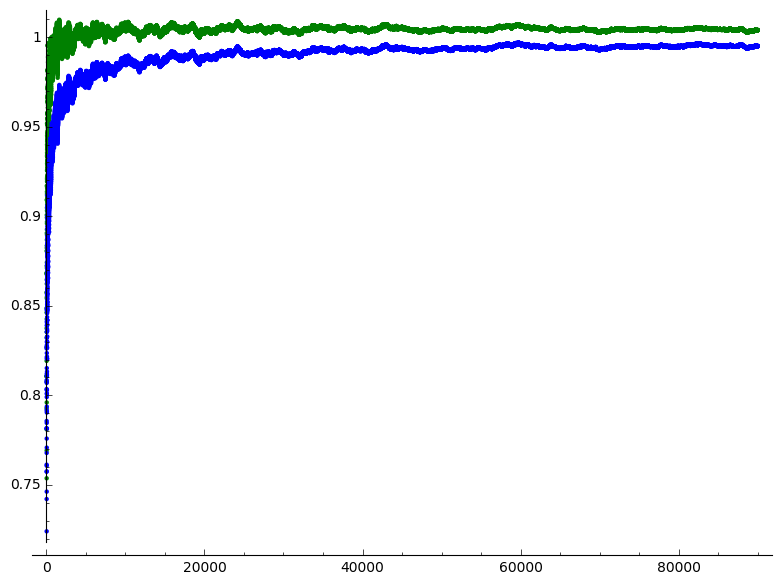}
\caption{Graphs of $\pi(n)/G(n)$  (top) and  $\pi(n)/\li(n)$ }
\end{center}
\label{fig:pi_estimate_li}
\end{figure}

\begin{table}[h]
$$
\begin{array}{| c | c | c | c | c | c  |c|}
\hline
n& \pi(n) & \lfloor G(n)\rfloor & \pi(n)- \lfloor G(n)\rfloor & \lfloor \li(n)\rfloor  & \pi(n)-\lfloor \li(n)\rfloor  & \frac{\pi(n)-G(n)}{\pi(n)}  \\
\hline
10 & 4 & 8 & -4 & 6 & -2 & -113.64 \% \cr \hline
10^2 & 25 & 27 & -2 & 30 & -5 & -10.78 \% \cr \hline
10^3 & 168 & 170 & -2 & 177 & -9 & -1.22 \% \cr \hline
10^4 & 1229 & 1222 & 7 & 1246 & -17 & 0.50 \% \cr \hline
10^5 & 9592 & 9547 & 45 & 9629 & -37 & 0.46 \% \cr \hline
10^6 & 78498 & 78340 & 158 & 78627 & -129 & 0.20 \% \cr \hline
10^7 & 664579 & 664297 & 282 & 664918 & -339 & 0.04 \% \cr \hline
\end{array}
$$
\caption{$G(n)$ against $ \li(n)$}
\label{table:pi_comparison}
\end{table}

\subsection*{A formula for the $n^\text{th}$ prime}

Given Theorem~\ref{chebyshev-pn}, one can ask how well is $p_n$ approximated by a constant times $S(n)$. It turns out that even with the constant equal to $1$, the approximation is quite good. 
Indeed, it appears that
\begin{equation}\label{pnApproxSn}
p_n \approx \sumH(n).
\end{equation}
Here the values of $n$ we have computed are until $n\approx 6\times 10^7$. 

\subsection*{Notes}
\begin{enumerate}
\item See Figure~\ref{fig:nthprime} mentioned in the introduction for a graph of $S(n)$ and $p_n$, for $n\le 5000$. 
\item Table~\ref{table:S-nthprime1} indicates that $S(n)$ is a better approximation to $p_n$ than $n\log n$. 
\item Table~\ref{table:S-nthprime2} contains the values of $S(n)$ and $p_n$ at some large 
random values of $n$, and the relative error of the approximation. 
It appears that the relative error is increasing. 
\end{enumerate}

\begin{table}[h]
$$
\begin{array}{| c | c | c | c | c|c |}
\hline
n & p_n & 	S(n) & S(n)-p_n & \frac{S(n)}{n\log n} &  \frac{S(n)-p_n}{p_n}  \\
\hline
3874958  & 65619413 & 68671533 & 3052120 &1.168215 & 4.65\% \\
\hline
17594789 & 326260271 & 344294853 & 18034582 &1.172923  &  5.53\% \\ 
\hline
29742315 & 568063631 & 601049024 & 32985393  &1.174364  &  5.81\% \\ 
\hline 
32970915 &  633319879 & 670440504 & 37120625 & 1.174637&  5.86 \%\\ 
\hline
46262236 & 905219069 & 959827638 & 54608569  & 1.175509  & 6.03\% \\ 
\hline
54074749 & 1066983163  &1 132214258 & 65231095 &1.175901 & 6.11\% \\ 
\hline
\end{array}
$$
\caption{Comparison of $S(n)$ with $p_n$: Random values }
\label{table:S-nthprime2}
\end{table}

From the above, it seems that the approximation is slightly off. In view of Conjecture~1, we expect the following.  
\begin{Conjecture} Let $p_n$ denote the $n$th prime. Then
$$p_n \sim \frac{1}{(1+b)} \sumH(n),$$
for a constant $b$, where $b\approx {e^B}/{10}\approx 0.22996\dots$. 
\end{Conjecture}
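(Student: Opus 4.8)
The plan is to reduce the conjecture, via the prime number theorem, to a single analytic statement about the average order of the height function. Since $p_n \sim n\log n$, the assertion $p_n \sim \tfrac{1}{1+b}\sumH(n)$ is equivalent to
\[
\sumH(n) \sim (1+b)\, n\log n .
\]
So the first step is to record this equivalence and to observe that it is exactly the asymptotic sharpening of the two-sided bound $a\,\sumH(n)\le p_n\le A_1\sumH(n)+A_2 n$ of Theorem~\ref{chebyshev-pn}: there the constants are left unidentified, and the task now is to collapse them to the single constant $1/(1+b)$. The same reduction also explains the phrase ``in view of Conjecture~1'': substituting $\sumH(n)\sim C\, n\log n$ into the definition of $G(n)$ gives $G(n)\sim \tfrac{1+b}{C}\cdot\tfrac{n}{\log n}$, which agrees with $\pi(n)\sim n/\log n$ precisely when $C=1+b$. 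Thus everything rests on evaluating the constant $C$ in $\sumH(n)\sim C\, n\log n$.

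Second, I would compute $\sumH(n)$ by exploiting the almost-additivity of Corollary~\ref{cor:sub-additivity}. Define the completely additive function $g$ by $g(2)=1$ and $g(p)=\height(p)-1=\height(p-1)$ for odd primes $p$. By Theorem~\ref{th:hn-formula} one has $\height(n)=g(n)+[\,n\text{ odd}\,]$, so $\sumH(n)=\sum_{k\le n} g(k)+O(n)$. For a completely additive function the sum over $k\le x$ telescopes through the $p$-adic valuations, giving the standard identity $\sum_{k\le x} g(k)=\sum_{p}g(p)\sum_{a\ge1}\lfloor x/p^a\rfloor$, whose main term is $x\sum_{p\le x}\frac{g(p)}{p-1}$. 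Hence the whole problem reduces to the prime sum
\[
\sum_{p\le x}\frac{\height(p-1)}{p-1}\sim C\log x .
\]

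The hard part will be controlling $\height(p-1)$ on average over primes, and this is where I expect the proof to stall. Heuristically $\height(m)$ has normal order $c\log m$ for a constant $c$ caught between $1/\log3$ and $1/\log2$ by the Pillai bounds \eqref{pillai}; if $\height(p-1)\sim c\log p$ in a sufficiently strong average sense, then Mertens' estimate $\sum_{p\le x}\frac{\log p}{p}\sim\log x$ forces $C=c$. The genuine obstruction is that $c$ is self-referential: iterating $\height=\height\circ\vp+1$ shows that $1/c$ should equal the mean, over the numbers $m$ occurring along typical $\vp$-trajectories, of
\[
\log\frac{m}{\vp(m)}=\log 2+\sum_{\substack{p\mid m\\ p>2}}\log\frac{p}{p-1},
\]
so one must prove that the relevant stationary distribution of trajectory residues exists and then evaluate this mean. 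This is exactly the kind of normal-order problem for iterates of $\vp$ studied by Erd\H{o}s, Granville, Pomerance and Spiro~\cite{EGPS1990}. The appearance of $\log 2$ (the baseline decrement from the factor of $2$ always present along an even trajectory) and of $\gamma$ (which enters through Mertens products $\prod_{p\le x}(1-1/p)^{-1}\sim e^{\gamma}\log x$) makes the constant $B=\gamma/\log2$ of \eqref{ConstantB} a plausible ingredient, but I would treat the closed form $1+b=1+e^{B}/10$ with caution: the factor $1/10$ is a numerical fit, so a rigorous argument would have to both establish that $C=\lim \sumH(n)/(n\log n)$ exists and then determine its true value, which may differ from $1+e^{B}/10$. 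Short of the full normal-order theorem, the realistic intermediate target is to prove that $\sumH(n)\asymp n\log n$ with explicit constants improving Theorem~\ref{chebyshev-pn}, and that $\sumH(n)/(n\log n)$ actually converges, leaving only the evaluation of the limiting constant as the remaining conjectural gap.
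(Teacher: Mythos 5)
This statement is Conjecture~2 of the paper, and the paper offers no proof of it. Its only support is numerical evidence (Tables~\ref{table:S-nthprime1} and \ref{table:S-nthprime2}) together with a purely formal consistency argument: the definition of $G(n)$ in Conjecture~1 is inverted to give $n^2/S(n)\approx\sum_{k\ge0}(-1)^k\pi(b^kn)$, and multiplying by $\log n/n$ and taking a formal limit yields $n\log n/S(n)\to 1/(1+b)$, i.e.\ $S(n)\sim(1+b)\,n\log n$, which combined with $p_n\sim n\log n$ gives the stated form. So there is no proof in the paper to match yours against. Your opening reduction is nevertheless correct and coincides with the paper's own Remark, which identifies the conjecture with $\delta=1+b$ in the Erd\H{o}s--Granville--Pomerance--Spiro conjecture $S(n)\sim\delta\, n\log n$. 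Where you go genuinely beyond the paper is the additive decomposition: writing $H(n)=g(n)+[\,n\text{ odd}\,]$ with $g$ completely additive (valid by Theorem~\ref{th:hn-formula}) and telescoping $\sum_{k\le x}g(k)=\sum_p g(p)\sum_{a\ge1}\lfloor x/p^a\rfloor$ correctly reduces the whole question to $\sum_{p\le x}H(p-1)/(p-1)\sim C\log x$. That is a legitimate structural insight the paper does not articulate, and it localizes the difficulty precisely.

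You are also right about where the argument must stall: the average behaviour of $H(p-1)$ over primes is exactly the problem of \cite{EGPS1990}, where even the existence of $\delta$ is only known to follow from a form of the Elliott--Halberstam conjecture, so no elementary argument in the spirit of Section~\ref{sec:chebyshev} can close the gap. One caution on the constant itself: in the paper, $1+b$ does not come from any Mertens-type or stationary-distribution evaluation --- it arises solely from summing the formal alternating series $\sum_k(-b)^k=1/(1+b)$ after replacing $\pi(b^kn)$ by $b^kn/\log n$, and $b\approx e^{\gamma/\log 2}/10$ is itself a numerical fit imported from Conjecture~1 (the factor $1/10$ in particular has no theoretical origin). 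So your skepticism about the closed form is well placed; carrying out your route would determine the true $\delta$ and thereby test Conjecture~1 as well. As written, the proposal is a correct reduction plus an honestly identified obstruction, not a proof, and the statement remains a conjecture --- which is exactly the status the paper assigns it.
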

\begin{Remark}
Erdos et.al.~\cite{EGPS1990} conjectured that there is a constant $\delta$ 
such that  $S(n)\sim \delta n \log n$. These authors showed that a certain form of the Elliot--Halberstam  conjecture implies their conjecture. 
Conjecture~2 implies that $\delta \approx {1+b}\approx 1.22996\dots$. Note that 
${1}/{(1+b)}\approx 0.813\dots$.
\end{Remark}

Professor Pomerance pointed out that \eqref{pnApproxSn} is inconsistent with Conjecture~1, and 
commented that it would be interesting to perform numerical computations to conjecture the value of 
$\delta$.  From Table~\ref{table:S-nthprime2}, it appears that the number is bigger than $1.175$. But 
we were only able to compute up to $n = 6\times 10^7$. We expect the limit to be larger. 

Below, we briefly outline the steps to obtain the value of $\delta$ that follows from Conjecture~1. This motivates our statement of Conjecture~2. 

Define $S(x)$ now as $\sum_{k\le x} H(k)$.  Take $G(x)=0$ when $x<2$. 
Then \eqref{def:G} can be written as
$$G(n) =  \frac{n^2}{\sumH(n)} +  \frac{(bn)^2}{\sumH( bn )}.$$
 On inverting this, we obtain the approximation
$$\frac{n^2}{S(n)} 
\approx \sum_{k\ge 0} (-1)^k G( b^k n ).
$$
Note that for any fixed $n$ this is a finite sum.  Upon replacing $G(x)$ by $\pi(x)$, we see that from Conjecture~1, we have 
\begin{equation}\label{SnPi}
\frac{n^2}{S(n)} 
\approx \sum_{k\ge 0} (-1)^k \pi( b^k n ).
\end{equation}

Let $\widehat{S}(n)$ denote the approximation to $S(n)$ obtained from \eqref{SnPi}, that is,
$$\widehat{S}(n) := \frac{n^2}{ \sum_{k\ge 0} (-1)^k \pi( b^k n )}.$$
The computations (with $b=0.229962525551838$) in Table~\ref{table:sn-estimate} indicate that $\widehat{S}(n)$ is quite an accurate way to estimate the value of $S(n)$. This further supports Conjecture~1.  Note that we may replace $\pi(x)$ by  $\li(x)$ to estimate $S(n)$. 

\begin{table}[h]
$$
\begin{array}{| c | c | c | c | c |}
\hline
n &  	S(n) & \widehat{S}(n) & S(n)-\lfloor\widehat{S}(n)\rfloor &  \frac{S(n)-\lfloor\widehat{S}(n)\rfloor}{S(n)}  \\
\hline
992 & 7569 & 7628  & -59 & -0.77949 \% \\ 
 \hline
7524 & 76008 & 76089 & -81 & -0.10656\% \\ 
\hline
 56762 & 713411& 710300 & 3111 & 0.43607\% \\ 
\hline
596319 & 9206082 & 9181418  & 24664 & 0.2679\% \\ 
\hline
17594789 & 344294853 & 344263181 & 31672 & 0.009199\% \\ 
\hline
32970915 &  670440504 &670698724& -258220 & -0.03851 \%\\ 
\hline
54074749 & 1132214258 &1133070822& -856564 & -0.07565 \% \\ 
\hline
\end{array}
$$
\caption{Comparison of $S(n)$ with $\widehat{S}(n)$: Random values }
\label{table:sn-estimate}
\end{table}

Multiplying both sides of \eqref{SnPi} by $\log(n)/n$, and taking (formal) limits, we obtain 
$$\lim_{n\to\infty} \frac{n\log n}{S(n)} \approx \frac{1}{1+b}\approx 0.813\dots,$$ 
which suggests that $\delta \approx {1+b}\approx 1.22996\dots$. 

From the above we see that Conjecture~2 is consistent with Conjecture~1.

%

%
%
\subsection*{Prime gaps}
We end this section with a few remarks about the prime gap. Let $g_n$ denote the prime gap, that 
is, $g_n=p_{n+1}-p_n.$ Given that $S(n)$ approximates $p_n$, it is natural to ask whether  $g_n$ is 
approximated by $H(n+1)$. On the other hand, the prime gap is notorious for its irregularity, and one cannot expect much in this regard.
Nevertheless, it seems that on average, $H(n+1)$ does quite a good job of approximating $g_n$. Indeed, Table~\ref{table:gaps} gives a few 
values of the following function:
\begin{equation}\label{prime_gap_ave}
S_{\Delta}(k)=\frac{1}{2^k} \sum_{2^k<m \le 2^{k+1} } \frac{g_m}{H(m+1)}.
\end{equation}
\begin{table}[h]
$$
\begin{array}{| c | c || c| c| }
\hline
k& S_\Delta(k) & k& S_\Delta(k) \\
\hline
1 & 1.16666666666667 & 10 & 1.02073447927940 \\
\hline
2 & 1.08333333333333 & 11 & 1.01134933317550 \\
\hline
3 & 1.23333333333333  & 12 & 1.00132388905581 \\
\hline
4 & 1.11041666666667  & 13 & 0.994501291351865 \\
\hline
5 & 1.08541666666667 & 14 & 0.988054230925281 \\
\hline
6 & 1.08377976190476  & 15 & 0.982638771673053 \\
\hline
7 & 1.03947792658730 & 16 & 0.976617044103504 \\
\hline
8 & 1.05543154761905 & 17 & 0.971171169038482 \\
\hline
9 & 1.04229290674603 & 18 & 0.966205359427567  \\
\hline
\end{array}
$$
\caption{$S_\Delta(k)$ for $k=1,\dots,18$}
\label{table:gaps}
\end{table}
In view of Conjecture~2, we expect the limit to be $1/(1+b)\approx 0.813\dots$. 

\subsection*{Acknowledgements}
We thank Carl Pomerance (Dartmouth) and Manjil P. Saikia (Vienna) for useful comments.  All numerical computations and graphics have been generated using Sage \cite{sage}.
The research of the second author was partially supported by the
Austrian Science Fund (FWF),  grant~F50-N15, in the framework of the
Special Research Program ``Algorithmic and Enumerative Combinatorics".



\end{document}